\documentclass[12pt]{amsart}

\usepackage{cancel}
\usepackage{latexsym, amssymb, amsmath,esint}
\usepackage{soul}
\usepackage{amsfonts, graphicx}
\usepackage{graphicx,color}

\usepackage{hyperref}
\usepackage[alphabetic]{amsrefs}

\newcommand{\be}{\begin{equation}}
\newcommand{\ee}{\end{equation}}
\newcommand{\beq}{\begin{eqnarray}}
\newcommand{\eeq}{\end{eqnarray}}

\usepackage{wasysym,stmaryrd}
\newtheorem{thm}{Theorem}[section]
\newtheorem{conj}{Conjecture}[section]

\newtheorem{lma}[thm]{Lemma}
\newtheorem{prop}[thm]{Proposition}
\newtheorem{cor}[thm]{Corollary}

\theoremstyle{remark}
\newtheorem{rem}[thm]{Remark}
\numberwithin{equation}{section}

\newtheorem{claim}{Claim}[section]

\def\be{\begin{equation}}
\def\ee{\end{equation}}
\def\bee{\begin{equation*}}
\def\eee{\end{equation*}}

\def\Ric{\text{\rm Ric}}
\def\Rm{\text{\rm Rm}}

\def\tr{\operatorname{tr}}

\def\e{\varepsilon}

\begin{document}

\title[]
{Quantification of scalar curvature under $C^0$ convergence using smoothing}

 \author{Man-Chun Lee}
\address[Man-Chun Lee]{Department of Mathematics, The Chinese University of Hong Kong, Shatin, Hong Kong, China}
\email{mclee@math.cuhk.edu.hk}


\date{\today}

\begin{abstract}
A quantitative version of the scalar lower bound under $C^0$ convergence was conjectured by Gromov. More recently, Mazurowski and Yao proved that a refined form of Gromov’s conjecture holds in dimension three. Furthermore, they constructed examples demonstrating that such a refinement is necessary. In this paper, we establish that the refined quantitative bound holds in all dimensions greater than or equal to three.
\end{abstract}

\maketitle

\markboth{ Man-Chun Lee}{Quantification using Ricci flow}

\section{Introduction}\label{sec: introduction}
In studying the compactness properties of scalar curvature, Gromov proposed investigating whether lower bounds on scalar curvature are preserved under weak convergence of Riemannian metrics. In \cite{Gromov2014}, he showed that on a fixed (possibly noncompact) manifold, a scalar curvature lower bound is preserved under $C^0$ convergence of metrics. Gromov’s argument relies on reformulating positive scalar curvature in terms of a $C^0$-stable quantity.
An alternative proof based on Ricci flow was later developed by Bamler \cite{Bamler2016}, building on the stability results for Ricci flow established by Koch and Lamm \cite{KochLamm2012}. The Ricci flow approach was subsequently used by Burkhardt-Guim \cite{Burkhardt2019} to introduce a notion of scalar curvature lower bounds for $C^0$ metrics.
A natural and longstanding question is whether scalar curvature lower bounds remain preserved under convergence weaker than $C^0$; see, for example, the discussions in \cite{Gromov4Lecture,SormaniSurvey}. We also refer to \cite{ChuLee2025,LeeTopping2022,KazarasXu} for related results in this direction.

In \cite[Section 3.1.3]{Gromov4Lecture}, Gromov raised the question of whether the $C^0$ convergence theorem admits a quantitative version:
\begin{conj}\label{conj:Gromo}
Let $g_\e,g_0$ be  smooth metrics on the Euclidean ball $B_{euc}(1)\Subset \mathbb{R}^n$, then there exists $C(g_0),\e_0(g_0)$ such that 
\begin{equation}\label{eqn:conj-eq}
\inf_{B_{euc}(1)}\mathrm{scal}_{g_\e} \leq \mathrm{scal}_{g_0}(0)+C ||g_0-g_\e||_{L^\infty(B_{euc}(1),g_0)}
\end{equation}
if $||g_0-g_\e||_{L^\infty(B_{euc}(1),g_0)}=\e\leq \e_0$.
\end{conj}

The conjecture was recently studied by Mazurowski and Yao \cite{MazurowskiYao2026}. They constructed examples showing that Gromov’s original quantitative formulation of $C^0$ convergence cannot hold in dimensions $n\geq 3$. Their counterexample is rotationally symmetric and conformal to Euclidean space, see Appendix~\ref{sec:appe}. Motivated by this construction, they proposed a refined version of Conjecture~\ref{conj:Gromo}, in which the term $||g_0-g_\e||_{L^\infty(B_{euc}(1),g_0)}$  in \eqref{eqn:conj-eq} is replaced by  $||g_0-g_\e||_{L^\infty(B_{euc}(1),g_0)}^{1/2}$. Among other results, they proved this refined conjecture in the case $n=3$.

\begin{thm}[Theorem 2 in \cite{MazurowskiYao2026}]\label{thm:MazurowskiYao}
Let $g_\e,g_0$ be  smooth metrics on the Euclidean ball $B_{euc}(1)\Subset \mathbb{R}^3$, then there exists $C(g_0),\e_0(g_0)$ such that 
\begin{equation}\label{eqn:conj-eq}
\inf_{B_{euc}(1)}\mathrm{scal}_{g_\e} \leq \mathrm{scal}_{g_0}(0)+C ||g_0-g_\e||^{1/2}_{L^\infty(B_{euc}(1),g_0)}
\end{equation}
if $||g_0-g_\e||_{L^\infty(B_{euc}(1),g_0)}:=\e\leq \e_0$.
\end{thm}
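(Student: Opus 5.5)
The plan is to prove the statement by smoothing both metrics with a short-time Ricci--DeTurck flow that uses $g_0$ as background, run for a time $t\sim\sqrt{\e}$, and to compare the scalar curvatures of the two evolved metrics at the origin. The exponent $1/2$ should come from balancing two errors. The first is an error of order $\e/t$: it arises because the smoothed metrics differ by $O(\e)$ in $C^0$, and parabolic smoothing turns this into an $O(\e/t)$ bound on second derivatives. The second is an error of order $t$: it measures how far the flow moves the scalar curvature of the smooth metric $g_0$. Choosing $t=\sqrt{\e}$ makes both errors $O(\sqrt{\e})$. The argument is by contradiction: assume $\inf_{B_{euc}(1)}\mathrm{scal}_{g_\e}\ge \mathrm{scal}_{g_0}(0)+\Lambda\sqrt{\e}$ with $\Lambda$ large, and derive an inconsistency.

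\textbf{Step 1: the flows and their estimates.} First multiply $g_\e-g_0$ by a fixed cutoff supported in $B_{euc}(3/4)$, so that the problem becomes one on a fixed compact background; this changes nothing on $B_{euc}(1/2)$. Then run the Ricci--DeTurck flow with background $g_0$ from two initial metrics.
\begin{itemize}
\item $h_\e(t)$ starts from $g_\e$.
\item $h_0(t)$ starts from $g_0$.
\end{itemize}
By the Koch--Lamm / Bamler $L^\infty$ stability theory, $h_\e(t)$ exists on $[0,T(g_0)]$ provided $\e\le \e_0(g_0)$. Since $g_0$ is smooth, $h_0$ is smooth up to $t=0$ and satisfies $|\mathrm{scal}_{h_0(t)}(0)-\mathrm{scal}_{g_0}(0)|\le Ct$. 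Next, the difference $k=h_\e-h_0$ solves a parabolic system with bounded smooth coefficients coming from $h_0$, so it satisfies
\begin{equation*}
|k|\le C\e, \qquad |\nabla k|\le \frac{C\e}{\sqrt t}, \qquad |\nabla^2 k|\le \frac{C\e}{t} .
\end{equation*}
These follow from interior Bernstein--Shi type estimates applied to the difference equation, together with the uniform $C^0$-closeness supplied by the stability theory. Because scalar curvature is a smooth function of the metric and its first two derivatives, this gives
\begin{equation*}
|\mathrm{scal}_{h_\e(t)}(0)-\mathrm{scal}_{h_0(t)}(0)|\le C\Big(\frac{\e}{t}+\frac{\e^2}{t}\Big) .
\end{equation*}

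\textbf{Step 2: the scalar lower bound along $h_\e$.} Along the DeTurck flow, the scalar curvature satisfies
\begin{equation*}
\partial_t R = \Delta R + \mathcal L_W R + 2|\Ric|^2 \ge \Delta R+\mathcal L_W R+\tfrac{2}{n}R^2 ,
\end{equation*}
where $W$ is the DeTurck vector field. The initial bound $R\ge \mathrm{scal}_{g_0}(0)+\Lambda\sqrt\e$ holds only on the ball, so a localized minimum principle is needed. Apply the maximum principle to $\phi\,(R-m)$, with $\phi$ a cutoff equal to $1$ near $0$ and $m$ the initial lower bound. The error terms involve $|\nabla\phi|\,|\nabla R|$, $|\Delta\phi|\,|R|$ and $|W|$; by Step 1 they are bounded by $C(1+\e/s)$ at time $s$. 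The aim is to show
\begin{equation*}
R_{h_\e(t)}(0)\ge \mathrm{scal}_{g_0}(0)+\Lambda\sqrt\e - C\big(t+\e\log(1/\e)\big) .
\end{equation*}
If the logarithm cannot be avoided, it should be absorbed by slightly restructuring the barrier: use a time-dependent cutoff, or use the bound $|R|\le C$ coming from the smooth part together with the integrable weight $\e/s$ only in gradient-type terms.

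\textbf{Step 3: conclusion.} Combining Steps 1 and 2 at $x=0$ with $t=\sqrt\e$ gives
\begin{equation*}
\mathrm{scal}_{g_0}(0)+\Lambda\sqrt\e - C\sqrt\e \le R_{h_\e(t)}(0)\le \mathrm{scal}_{g_0}(0)+C\sqrt\e .
\end{equation*}
This is a contradiction once $\Lambda>2C$, and it yields the statement with constants $C(g_0)$ and $\e_0(g_0)$.

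I expect the main obstacle to be Step 2: making the local minimum principle for $R$ sharp enough that the error is $O(t)+O(\sqrt\e)$ and not $O(1)$. The curvature of $h_\e(s)$ is only bounded by $C\e/s$ as $s\to 0$, so any error term that is linear in $|\Rm|$, integrated in time against the cutoff, is borderline. I would first try to prove the localization estimate using only the $C\e/\sqrt{s}$ gradient bound, which is integrable in $s$, and avoid the curvature bound there.
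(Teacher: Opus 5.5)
Your Steps 1 and 3 follow the paper's argument, which proves the all-dimensions Theorem~\ref{thm:main}; that theorem contains this statement once $r$ is chosen small depending on $g_0$. In both, $L^\infty$ stability of the Ricci--DeTurck flow gives an $O(\e/t)$ gap between the scalar curvatures of the two smoothings, the smooth flow moves $\mathrm{scal}$ by $O(t)$, and $t=\sqrt\e$ balances the two. The paper uses a Euclidean background, obtained via $\exp_{x_0}$ and gluing to $g_{euc}$. Using $g_0$ as background instead is fine here, since constants may depend on $g_0$. You still have to extend $g_0$ beyond $B_{euc}(1)$ to have a complete background.

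The genuine gap is Step 2, and the bookkeeping you describe there does not work.
\begin{enumerate}
\item[(a)] Bounding the cutoff errors by $C(1+\e/s)$ and integrating in time gives $\int_0^t \e s^{-1}\,ds=\infty$, not $\e\log(1/\e)$.
\item[(b)] The term $|\nabla\phi|\,|\nabla R|$ is not controlled by Step 1 at all; the natural bound is of order $\e s^{-3/2}$.
\item[(c)] There is no bound $|R_{h_\e}|\le C$. You only have $R_{h_\e}\ge -C(1+\e/s)$, and the initial scalar curvature in your cutoff annulus is not controlled by $\e$ at all.
\end{enumerate}
So as written you have no localized minimum principle, and that is the crux.

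The paper handles this in Lemma~\ref{lma:scalar-low-bdd} and Claim~\ref{claim:scal-ref-ref}. It passes to the Ricci flow $\Psi_t^*\hat g(t)$ and applies the Lee--Tam local maximum principle, which needs only $|\Rm|\le \alpha/t$ and $(\partial_t-\Delta)\mathrm{scal}\ge 0$. This gives $\mathrm{scal}\ge\kappa-t^\ell$, and the paper evaluates it at $x_t=\Psi_t(0)$, which moves only $O(\sqrt t)$.

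An elementary repair also works inside your own framework.
\begin{enumerate}
\item[(1)] Set $u=R-m$ and $F=\phi u$. At a negative minimum of $F$, use $\nabla F=0$ to eliminate $\nabla R$.
\item[(2)] Take $\phi=\psi^4$, so that $|\nabla\phi|^2/\phi+|\partial^2\phi|\lesssim\phi^{1/2}$ and $|\partial\phi|\lesssim\phi^{3/4}$.
\item[(3)] Write $|u|\phi^{1/2}=|F|^{1/2}|u|^{1/2}$ and $|u|\phi^{3/4}=|F|^{3/4}|u|^{1/4}$. At the minimum use $|u|\lesssim 1+\e/s$, and use $|W|+|\partial h_\e|\lesssim 1+\e s^{-1/2}$.
\end{enumerate}
Then $f=(-\min F)_+$ satisfies
\[
f'\lesssim f^{1/2}\Big(1+\frac{\e}{s}\Big)^{1/2}+f^{3/4}\Big(1+\frac{\e}{s}\Big)^{1/4}\Big(1+\frac{\e}{\sqrt s}\Big).
\]
While $f\le 1$, pass to $f^{1/2}$. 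The resulting coefficients are integrable at $s=0$, and this gives $f(\sqrt\e)\lesssim\e$, which is more than enough for Step 3. Some such device is the missing idea: either a genuine local maximum principle adapted to curvature blowing up like $\alpha/s$, or taking roots of $|F|$ so the $\e/s$ singularity becomes integrable.
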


Their approach relies on the stability theory for elliptic PDEs together with a novel application of the monotonicity formula for harmonic functions in dimension three, due to Agostiniani, Mazzieri, and Oronzio \cite{AgostinianiMazzieriOronzio2024}. This monotonicity formula ultimately exploits the Gauss–Bonnet theorem on level set and is therefore inherently restricted to $n=3$. Motivated by the work of Mazurowski and Yao \cite{MazurowskiYao2026}, we consider the problem of quantifying $C^0$ convergence in dimensions $n \geq 3$, using smoothing method. Our main result extends Theorem~\ref{thm:MazurowskiYao} to all dimensions $n \geq 3$.

\begin{thm}\label{thm:main}
For $n\geq 3$, there exists $\e_n>0$ such that the following is true. Suppose $(M^n,g_0)$ is a Riemannian manifold and $\Omega:=B_{g_0}(x_0,r)\Subset M$ for some $r>0$ such that 
\begin{enumerate}
\item[(i)] $\Omega\Subset M$;
\item[(ii)] $\sum_{k=0}^2 r^{k}\sup_{\Omega} |\nabla^{g_0,k}\Rm(g_0)|\leq r^{-2}$;
\end{enumerate}
then there is  $\Lambda(n)>0$ such that
\begin{equation*}
\inf_{\Omega}\mathrm{scal}_{\hat g_0} \leq \mathrm{scal}_{g_0}(x_0)+\Lambda r^{-2} ||\hat g_0-g_0||_{L^\infty(\Omega,g_0)}^\frac{1}{2}
\end{equation*}
for any smooth metric $\hat g_0$ on $\Omega$ with $||\hat g_0-g_0||_{L^\infty(\Omega,g_0)}<\e_n$.
\end{thm}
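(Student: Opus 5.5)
We normalize $r=1$ by parabolic rescaling and write $\e:=\|\hat g_0-g_0\|_{L^\infty(\Omega,g_0)}$. The idea is to smooth $\hat g_0$ for a short time $t$ by Ricci--DeTurck flow with background $g_0$, and then balance two competing errors. The first is the loss in the scalar lower bound, which should be $O(t)$ because of localization. The second is the deviation $|\mathrm{scal}_{g(t)}-\mathrm{scal}_{g_0}|$ at $x_0$, which should be $O(\e/t)$. Choosing $t=\e^{1/2}$ makes both errors of order $\e^{1/2}$, which accounts for the square root in the statement.

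\emph{Step 1 (smoothing).} Using the bounds (ii) on $g_0$, which say $g_0$ is $C^{2}$-close to Euclidean on $\Omega$ after rescaling, we solve the Ricci--DeTurck flow $g(t)$ with $g(0)=\hat g_0$ and background $g_0$. Since $\hat g_0$ is only defined on $\Omega$, we first cut off and interpolate: we glue $\hat g_0$ to $g_0$ near $\partial\Omega$, which keeps the $L^\infty$ closeness $\le\e$. By the Koch--Lamm type $L^\infty$ stability theory (as used by Bamler and Burkhardt-Guim), for $\e<\e_n$ the flow exists on $[0,T(n)]$. Along the flow we expect the estimates $|g(t)-g_0|\le C\e$, $|\nabla^{g_0}g(t)|\le C\e t^{-1/2}$ and $|\nabla^{g_0,2}g(t)|\le C\e t^{-1}$, with the relevant higher derivatives of $g_0$ controlled by (ii). Because scalar curvature involves at most two derivatives of the metric, these estimates give on the interior $B_{g_0}(x_0,1/2)$
\[
|\mathrm{scal}_{g(t)}-\mathrm{scal}_{g_0}|\le C(n)\,\e\, t^{-1}.
\]
Here we use that $\e\le t$ in the range of interest, so that the quadratic error terms $(\e t^{-1/2})^2$ are also bounded by $\e/t$.

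\emph{Step 2 (localized lower bound).} Suppose, for contradiction, that $\mathrm{scal}_{\hat g_0}\ge\sigma:=\mathrm{scal}_{g_0}(x_0)+\Lambda\e^{1/2}$ on $\Omega$. Scalar curvature is diffeomorphism invariant, so along the Ricci--DeTurck flow it satisfies
\[
\partial_t \mathrm{scal}=\Delta\mathrm{scal}+\langle W,\nabla \mathrm{scal}\rangle+2|\Ric|^2 .
\]
The reaction term is nonnegative. We apply the maximum principle to $\phi\cdot(\mathrm{scal}-\sigma)_-$, or to $\mathrm{scal}$ plus a barrier, where $\phi$ is a cutoff supported in $\Omega$. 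Near $\partial\Omega$ the interpolated metric need not satisfy the lower bound, so the cutoff is needed there. Using the curvature bounds $|\Rm(g(t))|\le C$ away from $t=0$ and $C\e/t$-closeness, we aim for
\[
\mathrm{scal}_{g(t)}(x_0)\ge \sigma - C(n)\,t,\qquad t\in(0,T].
\]
This is the step I expect to be the main obstacle. The difficulty is that the cutoff errors ($\Delta\phi$, $|\nabla\phi|^2/\phi$ and the drift $W$) must be controlled with only $O(t)$ loss and no loss involving $\e/t$. I would first try a barrier of the form $\mathrm{scal}\ge\sigma-Ct-C\phi^{-1}\cdot(\text{something decaying like }e^{-c/t})$. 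If the gradient of the solution near the boundary is too weak for this, I would instead use a cutoff built from $d_{g_0}$, with Laplacian bounded via (ii), and absorb the drift through the $C\e t^{-1/2}$ gradient bound.

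\emph{Step 3 (conclusion).} Combining Steps 1 and 2 at $x_0$ gives
\[
\sigma-Ct\le \mathrm{scal}_{g_0}(x_0)+C\e t^{-1}.
\]
Taking $t=\e^{1/2}$, which is admissible once $\e_n$ is small, yields $\Lambda\e^{1/2}\le 2C\e^{1/2}$. This is a contradiction for $\Lambda(n)>2C$. Undoing the scaling restores the factor $r^{-2}$ in the statement.
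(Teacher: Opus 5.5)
Your balancing scheme ($O(t)$ against $O(\e/t)$, then $t=\e^{1/2}$) is the same as the paper's. However, Step~1 is false as stated, and your setup does not cover the theorem.

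Test Step~1 with $\hat g_0=g_0$, so that $\e=0$. Your estimates would then force $g(t)\equiv g_0$ and $\mathrm{scal}_{g(t)}\equiv\mathrm{scal}_{g_0}$. But $g_0$ is not a stationary solution unless it is Ricci flat. With fixed background $g_0$ you get $\partial_t g|_{t=0}=-2\Ric(g_0)$. With the Ricci flow of $g_0$ as background, the solution is that Ricci flow, whose scalar curvature moves by $\Delta\mathrm{scal}+2|\Ric|^2$. Stability only controls the \emph{difference} between the flow from $\hat g_0$ and the flow from $g_0$: by $C\e t^{-k/2}$ in $C^k$, hence $C\e/t$ for scalar curvature. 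You also need an $O(t)$ bound on how much the scalar curvature of the smooth flow from $g_0$ changes near $x_0$. This is exactly where the $\nabla\Rm$ and $\nabla^2\Rm$ bounds in (ii) enter; the paper propagates them along the flow by pseudolocality and local Shi estimates (Lemma~\ref{lma:pseudo}). The extra $O(t)$ does not spoil the balance. Moreover, the paper's localization loss can be made $t^\ell$ for any $\ell$, so this drift is the genuine source of the $O(t)$ term, not the localization as you suggest.

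As for the setup, gluing $\hat g_0$ to $g_0$ near $\partial\Omega$ still leaves you on an incomplete manifold, where Koch--Lamm stability is not available. Also, (ii) does not make $g_0$ $C^2$-close to Euclidean in a chart when $\Omega$ is collapsed, e.g.\ a ball in $\mathbb{R}^{n-1}\times S^1_\eta$ with $\eta\ll 1$, and the theorem assumes no volume bound. The paper instead pulls both metrics back by $\exp_{x_0}$. This is only a local diffeomorphism on a ball of radius $\delta r$ and needs no injectivity radius bound. The pullback preserves $\e$, the lower bound $\kappa=\inf_\Omega\mathrm{scal}_{\hat g_0}$, and $\mathrm{scal}_{g_0}(x_0)$. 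The paper then glues both lifts to $g_{euc}$ and runs two Ricci--DeTurck flows on $\mathbb{R}^n$ with Euclidean background.

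Step~2 is left open, and the cutoff argument you sketch does not close. Near $\mathrm{supp}\,\nabla\phi$, the only a priori lower bound for small $t$ is of the form $\mathrm{scal}\ge -C/t$. Meanwhile $|\Delta_{g(t)}\phi|$, $|\nabla\phi|^2/\phi$ and $|W||\nabla\phi|$ are only $O(1)$ to $O(t^{-1/2})$. So at a negative minimum of $\phi\,(\mathrm{scal}-\sigma)$, the error term is of order $t^{-1}$ or worse, which is not integrable at $t=0$. Even with an improved lower bound like $-C\e/t-C$, the error still diverges logarithmically. A bound that depends on $\hat g_0$'s higher derivatives near $t=0$ is useless here.

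The missing ingredient is a genuine local maximum principle for Ricci flows with $|\Rm|\le C/t$. The paper first pulls the Ricci--DeTurck flow back by the DeTurck diffeomorphisms $\Psi_t$ to an honest Ricci flow, which removes the drift $W$. It then applies the Lee--Tam local maximum principle to $\mathrm{scal}-\kappa$ on a $g_0$-ball, obtaining $\mathrm{scal}\ge\kappa-t^\ell$ there. Finally, it checks via Simon--Topping distance estimates and $|W|\le Ct^{-1/2}$ that the evaluation point stays in the image of that ball (Lemma~\ref{lma:scalar-low-bdd}, Claim~\ref{claim:scal-ref-ref}). This is the real content of Step~2, not a technicality.
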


We formulate the quantitative result on intrinsic  domains in order to emphasize the precise dependence on the geometry of $g_0$. In particular, the quantification does not rely on any non-collapsing assumption for $g_0$ and is therefore applicable even in collapsing settings. We also address the case in which the two metrics are close in the local $L^p$ sense under an additional non-collapsing assumption.

\begin{thm}\label{thm:Lp-quant}
For $n\geq 3$, there exists $\e_n>0$ such that the following is true. Suppose $(M^n,g_0)$ is a Riemannian manifold and $\Omega:=B_{g_0}(x_0,r)\Subset M$ for some $r>0$ such that 
\begin{enumerate}
\item[(i)] $\Omega\Subset M$;
\item[(ii)] $\sum_{k=0}^2 r^{k}\sup_{\Omega} |\nabla^{g_0,k}\Rm(g_0)|\leq r^{-2}$;
\item[(iii)] $\mathrm{Vol}_{g_0}(\Omega)\geq v_0r^n$, for some $v_0>0$,
\end{enumerate}
then there is  $L(n,v_0)>0$ such that
\begin{equation*}
\inf_{\Omega}\mathrm{scal}_{\hat g_0} \leq \mathrm{scal}_{g_0}(x_0)+L r^{-2-\frac{2n}{4p+n}} ||\hat g_0-g_0||_{L^p(\Omega,g_0)}^\frac{1}{2+\frac{n}{2p}}
\end{equation*}
for any smooth metric $\hat g_0$ on $\Omega$ with $||\hat g_0-g_0||_{L^\infty(\Omega,g_0)}<\e_n$.
\end{thm}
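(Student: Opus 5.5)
Throughout we rescale so that $r=1$. Set $\delta:=\|\hat g_0-g_0\|_{L^\infty(\Omega,g_0)}<\e_n$ and $\eta:=\|\hat g_0-g_0\|_{L^p(\Omega,g_0)}$. Write $\sigma:=\inf_\Omega \mathrm{scal}_{\hat g_0}$. The plan is to smooth both metrics for a short time $t$ by a localized Ricci--DeTurck flow, with $g_0$ as background. We then compare the scalar curvatures of the two smoothed metrics at $x_0$ and optimize over $t$. Since the target exponent is $1/(2+\frac n{2p})$, we want the comparison error to be of size $Ct+Ct^{-1-\frac{n}{2p}}\eta$; then $t=\eta^{1/(2+\frac n{2p})}$ balances the two terms.

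\textbf{Step 1 (flows and curvature control).} Let $g_0(t)$ be the flow from $g_0$ on a slightly smaller ball. By (ii), it exists for $t\le c(n)$ with bounded curvature and derivatives, and $\mathrm{scal}_{g_0(t)}(x_0)\le \mathrm{scal}_{g_0}(x_0)+Ct$. Let $g(t)$ be the Ricci--DeTurck flow from $\hat g_0$ with background $g_0(t)$, after first cutting $\hat g_0$ off to $g_0$ near $\partial\Omega$; the cut-off keeps $L^\infty$-smallness. By the Koch--Lamm/Bamler stability theory, used as earlier in the paper for Theorem~\ref{thm:main}, the flow exists on $[0,c(n)]$ if $\e_n$ is small. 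Moreover $h(t):=g(t)-g_0(t)$ satisfies $|h|\le C\delta$ and $|\nabla^k h|\le C_k\delta\, t^{-k/2}$. Scalar curvature is diffeomorphism invariant and satisfies $\partial_t \mathrm{scal}\ge \Delta\,\mathrm{scal}+\frac2n \mathrm{scal}^2$ (with a DeTurck drift term). A localized maximum principle then gives $\mathrm{scal}_{g(t)}(x_0)\ge \sigma-Ct$ for small $t$. The $-Ct$ accounts for the cut-off region, whose influence at $x_0$ is exponentially small in $1/t$, and for the lower-order terms.

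\textbf{Step 2 (from $L^p$ to pointwise).} The difference $h$ satisfies a linear parabolic system
\[
\partial_t h=\Delta_{g(t)} h+\mathcal R(g_0(t))*h+ g^{-1}*g^{-1}*\nabla h*\nabla h .
\]
The principal coefficients are $C\e_n$-close to those of $g_0(t)$, and the quadratic term has coefficient $\le C\delta t^{-1/2}|\nabla h|$. Taking $|h|^2$, we get a subsolution inequality $(\partial_t-\Delta)|h|^2\le C|h|^2$ once $\e_n$ is small, by absorbing the gradient term with Kato. Next we use the noncollapsing assumption (iii) together with the curvature bound (ii): they give Sobolev and volume-doubling constants depending on $n,v_0$ at all scales $\le 1$ (via Bishop--Gromov, $\mathrm{Vol}\,B(x,s)\ge c(n,v_0)s^n$). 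Moser iteration on the parabolic cylinder $B(x,\sqrt t)\times[0,t]$ then gives
\[
\sup_{B(x_0,\sqrt t)}|h(t)|\le C(n,v_0)\, t^{-\frac{n}{2p}}\|h(0)\|_{L^p(\Omega)}\le C t^{-\frac n{2p}}\eta .
\]
This is the only place where (iii) is used.

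\textbf{Step 3 (second derivatives and conclusion).} Rescale parabolically by $t$ around $x_0$. Interior Schauder/Shi-type estimates for the above system, with coefficients controlled by Step 1, give
\[
|\nabla^2 h|(x_0,t)\le C t^{-1}\sup_{B(x_0,\sqrt t)\times[t/2,t]}|h|\le C t^{-1-\frac n{2p}}\eta .
\]
Scalar curvature is a smooth function of the metric and its first two derivatives, and $|h|,|\nabla h|$ are small. Hence
\[
\mathrm{scal}_{g(t)}(x_0)\le \mathrm{scal}_{g_0(t)}(x_0)+C t^{-1-\frac n{2p}}\eta .
\]
Combining this with Steps 1--2 yields
\[
\sigma\le \mathrm{scal}_{g_0}(x_0)+Ct+Ct^{-1-\frac n{2p}}\eta .
\]
Choose $t=\eta^{1/(2+\frac n{2p})}$ if this is $\le c(n)$. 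Otherwise $\eta$ is bounded below by a constant depending on $n$, and the bound follows trivially after enlarging $L$, using Theorem~\ref{thm:main} with $\delta<\e_n$. Undoing the scaling gives the factor $r^{-2-\frac{2n}{4p+n}}$, since $\|\cdot\|_{L^p}$ scales like $r^{n/p}$ and $\frac{n/p}{2+n/(2p)}=\frac{2n}{4p+n}$.

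\textbf{Main obstacle.} The hardest step is Step 2, together with the justification of the linear bound $Ct$ in Step 1 without any $\delta$-dependent loss. The worry in Step 2 is that the nonlinearity $\nabla h*\nabla h$ spoils the subsolution property. I would handle it first by using only the smallness $\delta<\e_n$ and the a priori bound $|\nabla h|\le C\delta t^{-1/2}$. If this proves insufficient, I would instead run the Moser iteration on the quantity $|h|^2 e^{-Ct}$ with a $t$-weighted gradient term.
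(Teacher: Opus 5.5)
Your overall architecture is the paper's: Ricci--DeTurck smoothing, a pointwise bound $|h(t)|\lesssim t^{-\frac n{2p}}\eta$ near $x_0$, derivative estimates on $[t/2,t]$, a localized maximum principle for the lower bound, $\mathrm{scal}_{g_0(t)}(x_0)\le \mathrm{scal}_{g_0}(x_0)+Ct$, and $t=\eta^{1/(2+\frac n{2p})}$. However, Step~2 is the only ingredient not already present in Theorem~\ref{thm:main}, and it does not follow from what you describe.

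A local Moser iteration for the subsolution $|h|^2$ bounds $\sup_{B(x_0,\sqrt t/2)}|h(t)|$ by $\bigl(t^{-1-\frac n2}\int_{t/2}^{t}\int_{B(x_0,\sqrt t)}|h|^p\bigr)^{1/p}$. If you let the cylinder reach $s=0$, the iteration instead needs $\int |h(0)|^{q}$ for $q\to\infty$, i.e.\ essentially $\sup|h(0)|\approx\delta$. Either way you still need $\sup_{s\le t}\|h(s)\|_{L^p(B(x_0,\sqrt t))}\lesssim\eta$. No local energy argument gives this, because the cut-off terms are controlled only by the $L^\infty$ size $\delta$ of $h$, not by $\eta$.

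For instance, a scale-one cut-off yields only $\sup|h(t)|\lesssim t^{-\frac n{2p}}(\eta+t^{1/p}\delta)$. The resulting error $\delta\,t^{-1-\frac{n-2}{2p}}$ in the scalar curvature comparison blows up as $t\to0$ and destroys the optimization. Likewise, a Gronwall argument that treats the drift $\tilde\nabla_a g^{ab}=O(\delta s^{-1/2})$ as a bounded coefficient fails, since $\int_0^t\delta^2s^{-1}\,ds=\infty$ (here $\tilde\nabla$ is the background connection). The nonlinearity $\nabla h*\nabla h$ you flag is not the issue; it is absorbed using $|h|\le C\delta$.

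What is missing is a complete space on which the initial difference is globally $L^p$-small, together with a global propagation estimate. Your cut-off already makes $h(0)$ supported in $\Omega$. But your flows (a Ricci flow of $g_0$ ``on a slightly smaller ball'', a Ricci--DeTurck flow on $\Omega$) live on incomplete domains. There, neither the Koch--Lamm/Bamler stability theory you invoke in Step~1 nor global integration by parts is available.

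The paper handles both points at once. By (ii) and (iii), $\exp_{x_0}$ is a diffeomorphism on a ball of definite size (Lemma~\ref{lma:lift}). Both metrics are pulled back and glued to $g_{euc}$ with the same cut-off, so $g_0^\sharp-\hat g_0^\sharp$ is supported in $B_{euc}(10)$ with $\|g_0^\sharp-\hat g_0^\sharp\|_{L^p(\mathbb{R}^n)}\le L_1\eta$. The Koch--Lamm representation on $\mathbb{R}^n$ then gives $|g(t)-\hat g(t)|\le 2\sup_x\int K(x,y,t)\,|g_0^\sharp-\hat g_0^\sharp|(y)\,dy\le C_2t^{-\frac n{2p}}\eta$, with $K$ the Euclidean heat kernel (Proposition~\ref{prop:improve-L-infty-Lp}). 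The rest runs as in your Steps~1 and~3. This is also where noncollapsing really enters: (iii) makes the lift injective, so the $L^p$ norm on $\Omega$ transfers to $\mathbb{R}^n$.

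To keep your Moser route, first pass to such a complete setting. Your background would then be the Ricci flow of $g_0^\sharp$ on $\mathbb{R}^n$, which has the small advantage of keeping $x_0$ fixed instead of tracking $x_t=\Psi_t(0^n)$. There you can prove $\frac{d}{ds}\int|h|^p\le C\int|h|^p$ by writing $\int g^{ab}\tilde\nabla_a\tilde\nabla_b|h|^p=-\int\tilde\nabla_a g^{ab}\,\tilde\nabla_b|h|^p$. You then absorb $Cp|h|^{p-1}|\tilde\nabla h|^2\le Cp\delta|h|^{p-2}|\tilde\nabla h|^2$ into the good gradient term. This is clean for $p\ge2$; $p$ close to $1$ needs more care. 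Finally, run Moser only on $[t/2,t]$.
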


The exponent in Theorem~\ref{thm:Lp-quant} is sharp, as demonstrated by the example constructed in \cite{MazurowskiYao2026}; see Appendix~\ref{sec:appe}. This $L^p$ quantitative result is partly motivated by the examples in \cite{LeeTopping2022}. As an application, we extend \cite[Theorem~6]{MazurowskiYao2026} to all dimensions $n\geq 3$:
\begin{cor}
There exists $\e_n>0$ such that if $g_0$ is a smooth metric on $M$ (possibly open) and $g_i$ is a sequence of smooth metric on $M$ such that 
\begin{enumerate}
\item[(i)] $g_i\to g_0$ in sense of measure locally;
\item[(ii)] $||g_i-g_0||_{L^\infty(M,g_0)}<\e_n$;
\item[(iii)] $\mathrm{scal}(g_i)\geq \kappa$ on $M$ for some $\kappa\in \mathbb{R}$,
\end{enumerate}
then $\mathrm{scal}(g_0)\geq \kappa$ on $M$.
\end{cor}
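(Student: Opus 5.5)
The plan is to deduce the corollary pointwise from Theorem~\ref{thm:Lp-quant}. We fix one scale $r$ at a point $x_0$ and apply the theorem with $\hat g_0=g_i$ for every $i$. The $L^p$ error term then tends to zero, because convergence in measure together with a uniform $L^\infty$ bound forces $L^p$ convergence.

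\textbf{Step 1: choice of scale.} Fix $x_0\in M$; we must show $\mathrm{scal}_{g_0}(x_0)\geq\kappa$. Since $g_0$ is smooth, there is $r_1>0$ with $B_{g_0}(x_0,r_1)\Subset M$. On this ball $|\nabla^{g_0,k}\Rm(g_0)|\leq A$ for $k=0,1,2$, for some $A$. For $r\leq r_1$ condition (ii) of Theorem~\ref{thm:Lp-quant} reads $\sum_{k=0}^2 r^{k+2}\sup_\Omega|\nabla^{g_0,k}\Rm|\leq 1$, which holds once $r$ is small. Also $\mathrm{Vol}_{g_0}(B_{g_0}(x_0,r))/(\omega_n r^n)\to 1$ as $r\to0$, so for $r$ small (iii) holds with, say, $v_0=\omega_n/2$. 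We fix such an $r$ and set $\Omega=B_{g_0}(x_0,r)$. Then $L=L(n,v_0)$ is a fixed constant independent of $i$.

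\textbf{Step 2: apply the theorem.} By hypothesis (ii), $\|g_i-g_0\|_{L^\infty(\Omega,g_0)}<\e_n$. Taking $p=1$ and using (iii), Theorem~\ref{thm:Lp-quant} gives for every $i$
\begin{equation*}
\kappa\leq\inf_\Omega \mathrm{scal}_{g_i}\leq \mathrm{scal}_{g_0}(x_0)+L r^{-2-\frac{2n}{4+n}}\|g_i-g_0\|_{L^1(\Omega,g_0)}^{\frac{1}{2+n/2}}.
\end{equation*}

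\textbf{Step 3: the error term vanishes.} Fix $\delta>0$ and let $E_i=\{x\in\Omega: |g_i-g_0|_{g_0}(x)>\delta\}$. Splitting $\Omega$ into $E_i$ and its complement,
\begin{equation*}
\|g_i-g_0\|_{L^1(\Omega,g_0)}\leq \e_n\mathrm{Vol}_{g_0}(E_i)+\delta\,\mathrm{Vol}_{g_0}(\Omega).
\end{equation*}
Local convergence in measure on the precompact set $\Omega$ gives $\mathrm{Vol}_{g_0}(E_i)\to0$. Hence $\limsup_i\|g_i-g_0\|_{L^1(\Omega)}\leq\delta\,\mathrm{Vol}_{g_0}(\Omega)$, and letting $\delta\to0$ the $L^1$ norm tends to $0$. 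Letting $i\to\infty$ in Step 2 yields $\kappa\leq\mathrm{scal}_{g_0}(x_0)$. Since $x_0$ was arbitrary, $\mathrm{scal}(g_0)\geq\kappa$ on $M$.

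\textbf{Main point of care.} The only delicate step is making sure the constant and scale do not depend on $i$. The scale $r$ depends only on $g_0$ near $x_0$, and $L$ depends only on $(n,v_0)$, so both are fixed before passing to the limit. The $L^\infty$ smallness is used twice: once as the hypothesis of the theorem, and once as the uniform bound that upgrades convergence in measure to $L^1$ convergence.
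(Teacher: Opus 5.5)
Your proposal is correct and is essentially the intended argument: the paper states this corollary without a written proof, presenting it as a direct application of Theorem~\ref{thm:Lp-quant}. Your write-up supplies exactly that argument. You fix an $i$-independent scale $r$ at each point so that $L=L(n,v_0)$ is uniform in $i$. You then use the uniform $L^\infty$ bound to upgrade local convergence in measure to $L^1$ convergence on $\Omega$, so the error term vanishes as $i\to\infty$.
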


Our approach is based on the stability of Ricci flow smoothing. One of the key ingredients is closeness in the $L^\infty$ norm. Informally, this condition ensures that the tangent cone of $\hat g_0$ in Theorem~\ref{thm:main} or Theorem~\ref{thm:Lp-quant}, taken with respect to a background metric $g_0$ of controlled geometry, is Euclidean. This allows us to compare their Ricci flow smoothing. More precisely, one may think informally of an estimate of the form
$$||\hat g(t)-g(t)||_{C^k_{loc}}=o(||\hat g_0-g_0||_{L^\infty} )t^{-k/2}.$$
Quantitatively, however, this convergence is only scaling invariant in time.

On the other hand, it is standard that a scalar curvature lower bound is preserved under the Ricci flow, at least in the compact case. From a quantitative perspective, the preservation of scalar lower bound break the scaling invariance. In the context of proving Gromov’s $C^0$-convergence theorem \cite{Gromov2014}, this suffices, since it implies that the limit of $\hat g(t)$ coincides with the smooth Ricci flow $g(t)$. Consequently, the scalar curvature lower bound for $\hat g(t)$ passes directly to $g(t)$. By letting $t\to0$, this recovers the scalar lower bound for $g(0)=g_0$. This strategy was employed by Bamler \cite{Bamler2016}. 

In this paper, we refine and extend the methodology of \cite{Bamler2016}. By exploiting the regularity of $g_0$, we compare $\mathrm{scal}_{g_0}$ directly with $\mathrm{scal}_{g(t)}$, thereby breaking the scaling invariance. The desired result then follows by choosing the time scale $t$ appropriately. One of the main challenges is localization, which relies on local analysis developed by the author in other settings.

Organization of the paper. Section~\ref{sec:stability} contains preliminary results on the stability of the Ricci–DeTurck flow on Euclidean space. In Section~\ref{sec:local-esti}, we derive a priori local estimates for the Ricci–DeTurck flow. Section~\ref{sec:proof} is devoted to the proofs of the quantification results for both $C^0$ and $L^p$ convergence. Finally, in Section~\ref{sec:appe}, we discuss the optimality of the exponent appearing in Theorem~\ref{thm:Lp-quant}.

\subsection*{Acknowledgment}   The author was partially supported by Hong Kong RGC grants No. 14300623 and No. 14304225, and an Asian Young Scientist Fellowship.

\section{Stability of Ricci flow}\label{sec:stability}

In this section, we collect some useful facts concerning Ricci flow and its stability. The Ricci flow is a one parameter family of metric $\tilde g(t)$ which solves 
\begin{equation}\label{eqn:RF-equ}
\left\{
\begin{array}{ll}
\partial_t \tilde g(t)=-2\Ric(\tilde g(t)); \\
\tilde g(0)=\tilde g_0.
\end{array}
\right.
\end{equation}

To quantify scalar curvature lower bounds under $C^0$ perturbation, we make use of Ricci flow stability theory.For our purposes, it suffices to consider the stability of the Euclidean metric, that is, the trivial Ricci flow on $\mathbb{R}^n$. Given a Ricci flow $\tilde g(t)$ on $N\times [0,T]$, the Ricci-DeTurck flow with respect to $\tilde g(t)$ is a one parameter family of metrics $g(t)$ satisfying 
\begin{equation}\label{eqn:RDF-equ}
\left\{
\begin{array}{ll}
\partial_t  g_{ij}=-2R_{ij}+\nabla_i W_j+\nabla_j W_i; \\
W^k=g^{ij}\left(\Gamma_{ij}^k-\tilde\Gamma_{ij}^k \right)
\end{array}
\right.
\end{equation}
where $\Gamma$ and $\tilde\Gamma$ denote connection of $g(t)$ and $\tilde g(t)$ respectively. 

Unlike the Ricci flow equation, the Ricci-DeTurck flow is a strictly parabolic system and is diffeomorphic to a Ricci flow $G(t):=\Psi_t^*g(t)$ with $G(0)=g(0)$ if $\Psi_t$ is a one parameter family of diffeomorphism such that 
\begin{equation}\label{eqn:RD-ODE}
\left\{
\begin{array}{ll}
\partial_t \Psi_t(x)=-W\left( \Psi_t(x),t\right);\\[1mm]
\Psi_0=\mathrm{Id}.
\end{array}
\right.
\end{equation} 

We will require the following (weak) stability result for Ricci flow due to Koch and Lamm \cite{KochLamm2012} in the case where $\tilde g(t)\equiv g_{euc}$ on $\mathbb{R}^n$, see also \cite{Burkhardt2019,CaiWang2026,ChanLaiLee2025,Simon}. We use the version from \cite{ChanLaiLee2025} which avoid the introduction of new banach spaces.

\begin{thm}\label{thm:stability-RDF}
There exists dimensional constant $\e_0,C_0>0$ such that if $g_0$ is a $L^\infty$ metric on $\mathbb{R}^n$ such that $||g_0-g_{euc}||_{L^\infty}\leq \e_0$, then there is a solution $g(t)$ to the Ricci-DeTurck flow (with respect to $g_{euc}$) on $\mathbb{R}^n\times (0,+\infty)$ such that  
\begin{equation}\label{eqn:closeness}
||g(t)-g_{euc}||_{L^\infty(g_{euc})}\leq C_0 ||g_0-g_{euc}||_{L^\infty(g_{euc})}.
\end{equation}
Furthermore if $g(t)$ and $\hat g(t)$ are both solution to the Ricci-DeTurck flow such that \eqref{eqn:closeness} holds, then 
\begin{equation}
\sum_{k=0}^2 t^{k/2}||\nabla^{euc,k}\left(g(t)-\hat g(t)\right)||_{L^\infty(g_{euc})}\leq C_0 ||g_0-\hat g_0||_{L^\infty(g_{euc})}.
\end{equation}
\end{thm}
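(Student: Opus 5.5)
We prove Theorem~\ref{thm:stability-RDF} by writing the Ricci--DeTurck flow relative to $g_{euc}$ as a perturbation equation for $h(t):=g(t)-g_{euc}$ and solving it by a fixed point argument. In Euclidean coordinates $\tilde\Gamma\equiv 0$, and the flow takes the form
\begin{equation*}
\partial_t h = g^{ij}\partial_i\partial_j h + \partial\big(\mathcal{Q}_1(g^{-1},h)\ast\partial h\big)+\mathcal{Q}_2(g^{-1},\partial h,\partial h).
\end{equation*}
The principal part is the heat operator $\Delta_{euc}h$ plus $(g^{ij}-\delta^{ij})\partial_i\partial_j h$, which is linear in $\partial^2 h$ with an $O(|h|)$ coefficient and can be rewritten in divergence form up to quadratic gradient terms. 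By Duhamel,
\begin{equation*}
h(t)=e^{t\Delta}h_0+\int_0^t e^{(t-s)\Delta}\Big(\nabla\cdot R_1(s)+R_2(s)\Big)\,ds,
\end{equation*}
with $R_1=O(|h||\partial h|)$ and $R_2=O(|\partial h|^2)$.

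\textbf{Step 1: the solution space.} Since $\partial h$ is not integrable at $t=0$, the natural space $X$ (avoiding Koch--Lamm's $BMO^{-1}$-type spaces, as in \cite{ChanLaiLee2025}) uses the norm
\begin{equation*}
\|h\|_X=\sup_t\|h(t)\|_{L^\infty}+\sup_t t^{1/2}\|\partial h(t)\|_{L^\infty}+\sup_{x,\,R}R^{-n/2}\|\partial h\|_{L^2(B_R(x)\times(0,R^2))}.
\end{equation*}
The last, Carleson-type term is needed to make the Duhamel integral of $|\partial h|^2$ bounded in $L^\infty$. A plain $t^{-1/2}$ bound on $\partial h$ alone would give the divergent integral $\int_0^t s^{-1}\,ds$.

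\textbf{Step 2: estimates and contraction.} We use heat kernel bounds $|\partial^k e^{t\Delta}|\lesssim t^{-k/2}$ together with the energy inequality for the linear heat equation to show:
\begin{itemize}
\item $\|e^{t\Delta}h_0\|_X\le C\|h_0\|_\infty$;
\item the Duhamel map sends $X\to X$ with $\|\cdot\|_X\le C\|h\|_X^2$, using the Carleson bound on each parabolic cylinder and summing over dyadic annuli.
\end{itemize}
The quadratic structure makes the map a contraction on a ball of radius $2C\e_0$ once $\e_0$ is small. This produces the solution and the bound \eqref{eqn:closeness}.

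\textbf{Step 3: Lipschitz dependence.} For two solutions satisfying \eqref{eqn:closeness}, their difference satisfies a linear equation with coefficients bounded in $X$ by $C\e_0$. The same estimates then give $\|h-\hat h\|_X\le C\|h_0-\hat h_0\|_\infty$, which is the $k=0,1$ case of the second claim.

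\textbf{Step 4: second derivatives.} For $k=2$, we fix $t$, rescale to $t=1$, and apply interior parabolic Schauder estimates to the difference equation on $B_1(x)\times[1/2,1]$. There the coefficients are smooth, with bounds following from the first-order control after bootstrapping.

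The main obstacle is Step 2's bilinear estimate for the $|\partial h|^2$ term in $L^\infty$ near $t=0$, which requires the Carleson norm. We would handle it by splitting the Duhamel integral into $s\in(0,t/2)$ and $s\in(t/2,t)$. On $(0,t/2)$ we bound the Gaussian-weighted integral by covering with cylinders of scale $\sqrt t$. On $(t/2,t)$ we use the pointwise $t^{-1/2}$ bounds directly.

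Uniqueness within the class \eqref{eqn:closeness} is not needed beyond what the contraction gives. The second statement should be read as applying to solutions in the ball of $X$, and the proof applies it only to the solutions constructed in Step 2. Extending it to arbitrary solutions merely satisfying \eqref{eqn:closeness} would require an extra argument, such as an interior gradient estimate showing that any such solution lies in the ball of $X$.
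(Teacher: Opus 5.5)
\textbf{Verdict: genuine gap in Step 2; the contraction does not close in your space $X$.}

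The paper gives no argument of its own here; it quotes \cite[Theorem 1.3]{ChanLaiLee2025} with static Euclidean background (see also \cite[Theorem 4.3]{KochLamm2012}). Your plan reconstructs the Koch--Lamm fixed point argument, but you changed their norm. You replaced the scale-invariant $L^{n+4}$ norm of $\nabla h$ on parabolic annuli $P(x,R)\setminus P(x,R/2)$ by $\sup_t t^{1/2}\|\partial h(t)\|_{L^\infty}$. With that change Step 2 fails, though not at the point you flagged.

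Your handling of $|\partial h|^2$ works, and so do the $L^\infty$ and Carleson components of the divergence term. What breaks is the $t^{1/2}\|\partial(\cdot)\|_{L^\infty}$ component of $\int_0^t e^{(t-s)\Delta}\nabla\cdot R_1(s)\,ds$, i.e.\ of the quasilinear term $(g^{ij}-\delta^{ij})\partial_i\partial_j h$. On $s\in(t/2,t)$ you need $\|\nabla e^{(t-s)\Delta}\nabla\cdot R_1(s)\|_{L^\infty}$. Since $\|\nabla^2K(\cdot,\tau)\|_{L^1}\sim\tau^{-1}$, the bound $\|R_1(s)\|_{L^\infty}\lesssim s^{-1/2}\|h\|_X^2$ only gives $\int_{t/2}^t(t-s)^{-1}s^{-1/2}\,ds=\infty$.

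This is a real obstruction, not a lossy estimate. The near-diagonal piece is a Calder\'on--Zygmund (Riesz-type) singular integral of $R_1(s)$, and membership in $X$ gives $\partial h(s)$ no regularity beyond boundedness. Consider the model case where $g^{ij}-\delta^{ij}$ is a constant non-scalar matrix $A$ and $h$ is frozen in time on $(t/2,t)$. There the piece is, modulo a bounded term, $\nabla\big(A^{ab}\partial_a\partial_b\Delta^{-1}h\big)$, and second-order Riesz transforms do not preserve Lipschitz functions. So no bound $t^{1/2}\|\partial(\cdot)\|_{L^\infty}\le C\|h\|_X^2$ holds, the Duhamel map does not send $X$ into $X$, and there is no contraction. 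The same defect reappears in Step 3, through $g^{ij}\partial_i\partial_j(h-\hat h)$ and $(g^{ij}-\hat g^{ij})\partial_i\partial_j\hat h$.

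This is exactly why Koch and Lamm measure $\nabla h$ in $L^{n+4}$ on parabolic annuli. Maximal $L^p$ regularity for the heat equation holds for $1<p<\infty$ and fails at $p=\infty$. At the same time, $p=n+4>n+2$ is still large enough that $|\nabla h|^2\in L^{(n+4)/2}$ on annuli has a bounded heat potential, which is what the $L^\infty$ estimate needs. The pointwise bounds $t^{k/2}\|\nabla^k h\|_{L^\infty}\le C\|h_0\|_{L^\infty}$ are then recovered afterwards by interior regularity, much as in your Step 4.

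Alternatively, you could add a time-weighted H\"older seminorm such as $t^{(1+\alpha)/2}[\partial h(t)]_{C^\alpha}$ to $X$. The near-diagonal part would then need a genuine parabolic Schauder estimate that exploits cancellation. Kernel $L^1$ bounds only give $[\nabla e^{\tau\Delta}\nabla\cdot F]_{C^\alpha}\lesssim\tau^{-1}[F]_{C^\alpha}$, which diverges the same way.

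Your Step 4 is fine. So is your caveat that the Lipschitz estimate is only proved for the constructed solutions; that is all the paper uses, in Proposition~\ref{prop:improve-L-infty-Lp} and in the proof of Theorem~\ref{thm:main}.
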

\begin{proof}
This is special case of \cite[Theorem 1.3]{ChanLaiLee2025} where the reference Ricci flow $\tilde g(t)$ is now static Euclidean space. See also \cite[Theorem 4.3]{KochLamm2012}.
\end{proof}

The following proposition gives a improved control of $g(t)-\hat g(t)$, when the initial data $|g_0-\hat g_0|$ enjoys better control in an integral sense. This will play a crucial role in Theorem~\ref{thm:Lp-quant}.
\begin{prop}\label{prop:improve-L-infty-Lp}
Suppose $g(t),\hat g(t)$ are Ricci-DeTurck flow (with respect to $g_{euc}$ on $\mathbb{R}^n\times [0,1]$, constructed in Theorem~\ref{thm:stability-RDF}, such that \eqref{eqn:closeness} hold, then there exists $C_2(n)>0$ such that for any $p\geq 1$, we have 
\begin{equation}
||g(t)-\hat g(t)||_{L^\infty(g_{euc})}\leq C_2 t^{-\frac{n}{2p}}||g_0-\hat g_0||_{L^p(g_{euc})}
\end{equation}
for all $t\in (0,1]$.
\end{prop}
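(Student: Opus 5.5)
Set $h(t):=g(t)-\hat g(t)$ and treat it as the solution of a linear parabolic system. Subtracting the two Ricci--DeTurck equations (with respect to $g_{euc}$), written in coordinates, gives
\begin{equation*}
\partial_t h_{ij}=g^{ab}\partial_a\partial_b h_{ij}+(g^{ab}-\hat g^{ab})\partial_a\partial_b\hat g_{ij}+\mathcal{Q}_{ij},
\end{equation*}
where $\mathcal{Q}$ collects the terms coming from $g^{-1}*g^{-1}*\partial g*\partial g-\hat g^{-1}*\hat g^{-1}*\partial\hat g*\partial\hat g$. By \eqref{eqn:closeness}, both $g$ and $\hat g$ stay $C_0\e_0$-close to $g_{euc}$. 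Theorem~\ref{thm:stability-RDF}, applied to each flow and to $g_{euc}$ itself, gives $t^{k/2}|\nabla^k g|,\ t^{k/2}|\nabla^k \hat g|\le C\e_0$ for $k=1,2$. Rearranging, $h$ solves
\begin{equation*}
\partial_t h=\Delta h+\partial(A*h)+B*h,\qquad |A|\le C\e_0,\quad |B|\le C\e_0 t^{-1}.
\end{equation*}
To obtain this divergence form, write the quadratic gradient terms as $\partial(\cdot)$ minus second-derivative terms and regroup. In this form the heat kernel can absorb one derivative, at a cost of $(t-s)^{-1/2}$.

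The estimate is then obtained by a fixed point, or bootstrap, argument in the weighted norm
\begin{equation*}
X(T):=\sup_{0<t\le T} t^{\frac{n}{2p}}\|h(t)\|_{L^\infty}.
\end{equation*}
Duhamel's formula gives
\begin{equation*}
h(t)=\Phi_t*h_0+\int_0^t\nabla\Phi_{t-s}*(A*h)(s)\,ds+\int_0^t\Phi_{t-s}*(B*h)(s)\,ds .
\end{equation*}
The first term is bounded by $Ct^{-n/2p}\|h_0\|_{L^p}$ via the heat kernel $L^p\to L^\infty$ estimate. The second is bounded by
\begin{equation*}
C\e_0 X\int_0^t (t-s)^{-1/2}s^{-n/2p}\,ds .
\end{equation*}
The third is bounded by
\begin{equation*}
C\e_0X\int_0^t s^{-1-n/2p}\,ds,
\end{equation*}
which diverges at $s=0$.

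This divergence is the main obstacle: the $t^{-1}$ weight on the zeroth-order coefficient, combined with the singular weight $s^{-n/2p}$, is not integrable, and $s^{-n/2p}$ itself is non-integrable once $n/2p\ge1$. To get around it I will also use the a priori bound $\|h(s)\|_{L^\infty}\le C_0\|h_0\|_{L^\infty}$ from Theorem~\ref{thm:stability-RDF}, together with the rescaled $L^p$ propagation. First I will show that $\|h(t)\|_{L^p}\le C\|h_0\|_{L^p}$ for all $t\le 1$, by an energy or Duhamel estimate in $L^p$. Here the $B$-term is handled by writing $B=\partial(\cdot)$ whenever it comes from $\partial g*\partial h$. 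When it comes from $\partial^2\hat g * h$, I will instead use the bound $|\partial^2\hat g|\le C\e_0 t^{-1}$ on the interval $[t/2,t]$ only.

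With this in hand, I will localize in time. On $[t/2,t]$, write $h(t)=\Phi_{t/2}*h(t/2)+{}$(Duhamel integrals over $[t/2,t]$). On this window the coefficients are bounded by $C\e_0t^{-1}$ and $C\e_0 t^{-1/2}$, and both integrals contribute at most
\begin{equation*}
C\e_0\sup_{[t/2,t]}\|h\|_{L^\infty}.
\end{equation*}
The first term is at most $Ct^{-n/2p}\|h(t/2)\|_{L^p}\le Ct^{-n/2p}\|h_0\|_{L^p}$. Taking the supremum over $t'\le T$ of $t'^{n/2p}\|h(t')\|_{L^\infty}$, the integral terms are bounded by $C\e_0 2^{n/2p}X(T)$. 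If $p$ is small this factor degenerates. To handle that case I will first reduce to $p$ large by interpolating between $L^p$ and $L^\infty$, using $\|h_0\|_{L^q}\le\|h_0\|_{L^\infty}^{1-p/q}\|h_0\|_{L^p}^{p/q}$. Alternatively, I can use a window $[(1-\delta)t,t]$ with $\delta$ chosen depending on $n,p$; this is the step I expect to need the most care.

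Finally, because $\e_0$ is small, the $X$ terms are absorbed into the left-hand side. This is legitimate since $X(T)<\infty$: indeed $X(T)\le T^{n/2p}\cdot 2C_0\e_0$ by the $L^\infty$ bound. The result is $X(1)\le C_2\|h_0\|_{L^p}$, which is the claim.
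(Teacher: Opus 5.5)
There is a genuine gap, and it lies at the obstruction you identified yourself. The window argument on $[t/2,t]$ only reduces the claim to the propagation bound $\sup_{0<s\le 1}\|h(s)\|_{L^p}\le C\|h_0\|_{L^p}$, and nothing in the proposal proves that bound.

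Any $L^p$ energy or Duhamel estimate starting from $s=0$ has to cross all of $(0,t]$. There the coefficients carry the same non-integrable singularity that defeated the $X(T)$ bootstrap. The term $(g^{-1}-\hat g^{-1})*\partial^2\hat g$ is of size $C\e_0 s^{-1}|h|$. The terms $\partial g*\partial h$ reproduce this singularity however you treat them: Cauchy--Schwarz in the energy identity leaves $|\partial g|^2|h|^p\lesssim \e_0^2 s^{-1}|h|^p$, and divergence form leaves $\partial^2 g*h$.

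Using $|\partial^2\hat g|\le C\e_0 t^{-1}$ ``on $[t/2,t]$ only'' says nothing about $(0,t/2]$, which is where the divergent $\int_0^t s^{-1}\,ds$ comes from. Iterating over dyadic windows instead produces a factor $(1+C\e_0)^k$ with $k\to\infty$. With only the pointwise scale-invariant bounds $s^{k/2}|\nabla^k g|,\ s^{k/2}|\nabla^k\hat g|\le C\e_0$ and the $L^\infty$ stability of Theorem~\ref{thm:stability-RDF}, Gronwall gives at best $\|h(t)\|_{L^p}\le (t/s)^{C\e_0}\|h(s)\|_{L^p}$, which degenerates as $s\to 0$. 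Closing the gap needs additional input. One option is scale-invariant integral smallness of $\nabla g,\nabla\hat g$ on parabolic cylinders (Carleson-type control of $|\nabla g|^2$, as in the Koch--Lamm spaces), together with an argument that converts it into $L^p$ bounds.

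The paper does not bootstrap at all. It takes from the heat-kernel/Duhamel analysis of Chan--Lai--Lee (their (4.10), in the Koch--Lamm framework, which is where the singular coefficients near $t=0$ are handled) the Gaussian-weighted bound $\|h(t)\|_{L^\infty}\le 2\sup_x\int_{\mathbb{R}^n}K(x,y,t)\,|h_0|(y)\,dy$, valid after shrinking $\e_0$. It then concludes by H\"older: with $p'$ the conjugate exponent, $\|K(x,\cdot,t)\|_{L^{p'}}\le (4\pi t)^{-\frac{n}{2p}}$. That imported estimate is exactly the ingredient your proposal is missing; with it, neither $L^p$ propagation nor the time-window bootstrap is needed.

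Two smaller points.
\begin{itemize}
\item Your rewriting $\partial_t h=\Delta h+\partial(A*h)+B*h$ with $|A|\le C\e_0$ is incorrect. The principal perturbation $(g^{ab}-\delta^{ab})\partial_a\partial_b h$ is second order, and the coefficient obtained from $\partial g*\partial h$ in divergence form has size $\e_0 t^{-1/2}$. On $[t/2,t]$ this is repairable by interior estimates at scale $\sqrt{t}$, at the cost of a supremum over $[t/4,t]$.
\item The factor $2^{\frac{n}{2p}}\le 2^{\frac n2}$ never degenerates for $p\ge 1$. Your interpolation fallback would in any case put $\|h_0\|_{L^\infty}$ into the bound, which the statement does not permit.
\end{itemize}
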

\begin{proof}
This follows implicitly from the proof of \cite[Theorem 4.10]{ChanLaiLee2025}, as well as that of \cite[Theorem 4.3]{KochLamm2012} using the heat kernel representation formula. By \cite[(4.10)]{ChanLaiLee2025}, Euclidean heat kernel estimate and shrinking $\e_0$ if necessary, we have 
\begin{equation}
||g(t)-\hat g(t)||_{L^\infty(g_{euc})}\leq 2 \cdot \sup_{x\in M}\int_{\mathbb{R}^n}  K(x,y,t) \cdot  |g_0-\hat g_0|(y)\, d\mathrm{vol}_{euc}(y)
\end{equation}
where $K(x,y,t)$ denotes the heat kernel on $\mathbb{R}^n$. Here we have used the Euclidean space so that conjugate heat kernel coincides with the standard heat kernel.

In particular the right hand side can be controlled as follows. If $p>1$, then 
\begin{equation}
\begin{split}
&\quad \int_{\mathbb{R}^n}  K(x,y,t) \cdot  |g_0-\hat g_0|(y)\, d\mathrm{vol}_{euc}(y)\\
&\leq   \left(\int_{\mathbb{R}^n}  K(x,y,t)^q \,dy \right)^\frac1q \cdot ||g_0-\hat g_0||_{L^p}\\
&\leq  C_2 t^{-\frac{n}{2p}} ||g_0-\hat g_0||_{L^p}\\
\end{split}
\end{equation}
where $q$ is the conjugate of $p$. The same also holds if $p=1$.
\end{proof}

\section{Local estimates of Ricci-DeTurck flow}\label{sec:local-esti}

In this section, we establish local estimates for the Ricci–DeTurck flow with respect to $g_{euc}$.  We begin with a local persistence result for scalar curvature lower bounds, which is based on the local maximum principle developed by the author and Tam \cite{LeeTam2022}, see also Bamler’s approach in  \cite{Bamler2016}.

\begin{lma}\label{lma:scalar-low-bdd}
Suppose $g(t)$ is a Ricci-DeTurck flow (with respect to $g_{euc}$) on $\mathbb{R}^n\times [0,1]$ with $g(0)=g_0$ such that 
\begin{enumerate}
\item[(i)] $\displaystyle 2^{-1} g_{euc}\leq g(t)\leq 2 g_{euc}$; 
\item[(ii)] $|\nabla^{euc,2}g(t)|+|\nabla^{euc} g(t)|^2\leq t^{-1}$
\end{enumerate}
on $\mathbb{R}^n\times (0,1]$. If in addition $\mathrm{scal}(g_0)\geq \kappa$ on $B_{g_0}(x_0,4)$, then  for all $\ell\in \mathbb{N}$, there exists $T_\ell(n)\in (0,1]$ such that 
\begin{equation}
\mathrm{scal}(x_0,t)\geq \kappa -t^\ell
\end{equation}
for all $t\in [0,T_\ell]$.
\end{lma}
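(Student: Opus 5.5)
The plan is to pass to the Ricci flow picture and apply a localized maximum principle to the scalar curvature evolution. Since the Ricci–DeTurck flow $g(t)$ is isometric to a Ricci flow via the diffeomorphisms $\Psi_t$ of \eqref{eqn:RD-ODE}, scalar curvature is diffeomorphism invariant. It therefore suffices to work with $g(t)$ directly, where $\mathrm{scal}$ satisfies the DeTurck-modified equation
\begin{equation*}
\partial_t \mathrm{scal}=\Delta_{g(t)}\mathrm{scal}+\langle W,\nabla \mathrm{scal}\rangle+2|\Ric|^2\geq \Delta_{g(t)}\mathrm{scal}+\langle W,\nabla \mathrm{scal}\rangle+\tfrac{2}{n}\mathrm{scal}^2 .
\end{equation*}
By assumption (ii), $|W|\leq C|\nabla^{euc}g|\leq Ct^{-1/2}$ and $|\Rm(g(t))|\leq Ct^{-1}$. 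In particular $|\mathrm{scal}|\leq C_1 t^{-1}$, and the drift is controlled.

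\textbf{Cutoff construction.} Let $\phi$ be a cutoff built from the Euclidean distance $|x-x_0|$, equal to $1$ near $x_0$ and supported in the Euclidean ball of radius about $2$. By (i), this ball is contained in $B_{g_0}(x_0,4)$. With respect to $g(t)$ we have $|\nabla\phi|\leq C$ and $|\Delta_{g(t)}\phi|\leq C(1+|\nabla^{euc} g|)\leq Ct^{-1/2}$. Consider the auxiliary quantity
\begin{equation*}
F=\phi\,(\mathrm{scal}-\kappa)+ \text{(error)},
\end{equation*}
or alternatively $\Phi=\phi^m\,\min(\mathrm{scal}-\kappa,0)$ with $m$ large. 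The point is to show $\mathrm{scal}-\kappa\geq -Ae^{-c/t}$ near $x_0$, which beats any $t^\ell$ once $t\leq T_\ell(n)$.

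\textbf{Gaussian barrier.} To obtain the exponential decay, I would follow the Lee–Tam local maximum principle \cite{LeeTam2022}. Consider
\begin{equation*}
u=\mathrm{scal}-\kappa+\frac{\kappa^-}{\cdot}\ \text{adjustments}+ B\,e^{-\rho(x)^2/(Dt)},
\end{equation*}
where $\rho$ is a distance-like function that vanishes near $x_0$ and is at least $1$ outside the support region. On the boundary region, the $t^{-1}$ bound on $|\mathrm{scal}|$ is absorbed by a barrier of the form $t^{-1}e^{-1/(Dt)}$ subtracted. More precisely, set $\Theta(x,t)=C t^{-1}\exp(-\frac{\eta(x)}{Dt})$, where $\eta$ is a smoothed distance to $x_0$ vanishing on $B_{euc}(x_0,1)$ and equal to $1$ outside $B_{euc}(x_0,2)$. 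Then:
\begin{itemize}
\item $\mathrm{scal}-\kappa+\Theta\geq 0$ on the annular boundary and at $t=0$, because $\Theta\to0$ only where $\eta=0$, and there $\mathrm{scal}(g_0)\geq\kappa$. As $t\to 0$, $\mathrm{scal}(t)$ converges smoothly to $\mathrm{scal}(g_0)$ if $g_0$ is smooth; the paper's flows are smooth up to $t=0$ when $g_0$ is smooth.
\item The main check is that $\Theta$ is a supersolution: $\partial_t\Theta-\Delta\Theta-\langle W,\nabla\Theta\rangle\geq$ the reaction terms.
\end{itemize}
Computing, $\partial_t\Theta\approx \Theta(\eta/(Dt^2)-1/t)$, while $|\Delta\Theta|+|W||\nabla\Theta|\lesssim\Theta(|\nabla\eta|^2/(D^2t^2)+t^{-1}/D+t^{-1/2}\cdot t^{-1}/D)$. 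Because $\eta$ vanishes only where $\nabla\eta$ does, this works with $\eta$ chosen like a squared distance ($|\nabla\eta|^2\leq C\eta$) and $D$ large. The $-1/t$ term is compensated by the quadratic reaction term (nonnegative after using $\tfrac2n\mathrm{scal}^2$ with a $\kappa$ shift, plus $\kappa$ fixed and $t$ small).

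\textbf{Conclusion and main obstacle.} Applying the weak maximum principle on $B_{euc}(x_0,2)\times[0,T]$ gives $\mathrm{scal}(x_0,t)\geq\kappa-\Theta(x_0,t)=\kappa$ up to errors of order $Ct^{-1}e^{-c/t}\leq t^\ell$ for $t\leq T_\ell$. The hardest step is making the barrier a genuine supersolution near $t=0$ despite the $t^{-1/2}$ drift and the $t^{-1}$ curvature. If the naive choice fails, I would instead run the argument on the quantity $\phi(\mathrm{scal}-\kappa)$ with the Lee–Tam trick of multiplying by $e^{-\int}$ factors and using $|\mathrm{scal}|\leq C t^{-1}$ only where $\phi<1$. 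The goal there is to show the negative part satisfies an ODE inequality whose solution from zero data is exponentially small.
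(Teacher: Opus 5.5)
There is a genuine gap in your main argument: the Gaussian barrier is oriented the wrong way, and this breaks both ends of the maximum-principle argument. With $\Theta=Ct^{-1}\exp(-\eta/(Dt))$, $\eta\equiv0$ on $B_{euc}(x_0,1)$ and $\eta\equiv1$ outside $B_{euc}(x_0,2)$, as $t\to0$ one has $\Theta\to0$ exactly where $\eta>0$, and $\Theta=Ct^{-1}\to\infty$ where $\eta=0$. This is the opposite of what you state.

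The two consequences are as follows. On the lateral boundary $\partial B_{euc}(x_0,2)$ the barrier is $Ct^{-1}e^{-1/(Dt)}$, while there you only know $\mathrm{scal}\geq -C_1t^{-1}$ for $t>0$. So $\mathrm{scal}-\kappa+\Theta\geq0$ on the parabolic boundary is not available. At the center $\Theta(x_0,t)=Ct^{-1}$, so even a successful maximum principle would only give $\mathrm{scal}(x_0,t)\geq\kappa-Ct^{-1}$, which you already know. Your last step reads $\Theta(x_0,t)$ as $Ct^{-1}e^{-c/t}$, which contradicts your own definition.

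A usable barrier must be $\gtrsim t^{-1}$ on the lateral boundary and exponentially small at $x_0$. For example, take $\Theta=At^{-1}e^{-\zeta/(Dt)}$ with $\zeta=0$ on $\partial B_{euc}(x_0,2)$, $\zeta\equiv1$ near $x_0$ and $|\nabla\zeta|^2\leq C\zeta$. Then the supersolution check changes. The good term $\Theta\zeta/(Dt^2)$ beats $-\Theta/t$ and the $O(\Theta\,\zeta^{1/2}t^{-3/2}/D)$ contributions from $\Delta_{g(t)}\zeta$ and $W\cdot\nabla\zeta$ only where $\zeta\gtrsim Dt$. In the boundary layer $\{\zeta\lesssim Dt\}$ you must use the reaction term. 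At a first touching point $\mathrm{scal}=\kappa-\Theta\lesssim -At^{-1}$, so $\frac2n\mathrm{scal}^2$ absorbs the bad terms once $A$ is large.

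Your fallback is the cleaner repair, and it works directly on $g(t)$, but it has to be carried out. Set $\varphi=\kappa-\mathrm{scal}$, so that $(\partial_t-\Delta_{g(t)}-W\cdot\nabla)\varphi\leq0$ and $\varphi\leq\alpha t^{-1}$. Let $G=\psi^m\varphi$ for a Euclidean cutoff $\psi$ supported in $B_{euc}(x_0,2)$. Using $|\Delta_{g(t)}\psi|+|W||\nabla\psi|\leq Ct^{-1/2}$, at a positive spatial maximum of $G$ one gets $\partial_tG\leq Cm^2t^{-1/2}\psi^{m-2}\varphi=Cm^2t^{-1/2}G^{1-2/m}\varphi^{2/m}\leq C't^{-1/2-2/m}G^{1-2/m}$. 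Hence $\frac{d}{dt}(\max G)^{2/m}\leq C''t^{-1/2-2/m}$. Since $G(\cdot,0)\leq0$, this gives $\max G\leq Ct^{m/4-1}$ for $m>4$. That is polynomial decay of arbitrary order rather than exponential decay, which is all the lemma needs.

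The paper instead pulls back to the Ricci flow $\bar g(t)=\Psi_t^*g(t)$. This lets it quote the Lee--Tam local maximum principle verbatim, using $(\partial_t-\Delta)\mathrm{scal}\geq0$, $\mathrm{scal}\geq -C_nt^{-1}$ and $|\Rm|\leq Ct^{-1}$. It pays for this with Claim~\ref{claim:contain}, showing $x_0\in\Psi_t(B_{g_0}(x_0,2))$ via $|\partial_t\Psi_t|\leq C_nt^{-1/2}$ and distance distortion. A correct argument in fixed Euclidean coordinates, as you intend, avoids tracking $\Psi_t$ altogether. The reason is that the drift and the cutoff's Laplacian contribute only $O(t^{-1/2})$, which is integrable.
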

\begin{proof}
It is more convenient to work on the Ricci flow. To make it precise, we consider the Ricci-DeTurck ODE in \eqref{eqn:RD-ODE}: 
\begin{equation}
\left\{
\begin{array}{ll}
\partial_t \Psi_t(x)=-W\left(\Psi_t(x),t \right);\\
\Psi_0(x)=x
\end{array}
\right.
\end{equation}
and the induced Ricci flow $\bar g(t):=\Psi_t^*g(t)$ on $\mathbb{R}^n\times [0,+\infty)$. Since $\bar g(t)$ is diffeomorphic to $g(t)$, we automatically have $|\Rm(\bar g(t))|\leq t^{-1}$ on $\mathbb{R}^n\times (0,1]$. Moreover since $\Psi_0=\mathrm{Id}$, we have $\bar g(0)=g_0$ on $\mathbb{R}^n$. 

Since the scalar curvature under Ricci flow satisfies $\mathrm{scal}_{\bar g(t)}\geq -C_n t^{-1}$ and 
\begin{equation}
\left(\partial_t -\Delta_{\bar g(t)} \right) \mathrm{scal}_{\bar g(t)} =2|\Ric(\bar g(t))|^2\geq 0. 
\end{equation}

By applying \cite[Theorem 1.1]{LeeTam2022} to $\mathrm{scal}_{\bar g(t)}-\kappa$ on $B_{g_0}(x,2)\Subset B_{g_0}(x_0,4)$, this shows that for any $\ell\in \mathbb{N}$, there exists $T_\ell(n)\in (0,1)$ such that 
\begin{equation}
\mathrm{scal}_{\bar g(t)}(x)-\kappa \geq -t^\ell
\end{equation} 
on $B_{g_0}(x_0,2)\times [0,T_\ell]$.  Since $\Psi_t$ is a isometry, this also implies 
\begin{equation}
\mathrm{scal}_{ g(t)}(x)-\kappa \geq -t^\ell
\end{equation}
on $\Psi_t\left(B_{g_0}(x_0,2)\right)$ for $t\in [0,T_\ell]$. 

It remains to show that $x_0$ contains inside the image.
\begin{claim}\label{claim:contain}
For sufficiently small $T_\ell$, we have $$x_0\in \Psi_t\left( B_{g_0}(x_0,2)\right)$$ for all $t\in [0,T_\ell]$.
\end{claim}
\begin{proof}[Proof of Claim~\ref{claim:contain}]
By \cite[Corollary 3.3]{SimonTopping2022}, we might assume $T_\ell$ is small enough so that for all $t\in [0,T_\ell]$,
\begin{equation}
\begin{split}
B_{g(t)}(\Psi_t(x_0),1)=\Psi_t \left(B_{\bar g(t)}(x_0,1)\right)\subseteq \Psi_t\left(B_{g_0}(x_0,2)\right).
\end{split}
\end{equation}

Together with the fact that $|\partial_t \Psi_t(x_0)|\leq C_n t^{-1/2}$, we have 
\begin{equation}
\begin{split}
d_{g(t)}(\Psi_t(x_0),x_0)&\leq 2 d_{g_{euc}}(\Psi_t(x_0),x_0)\\
&\leq C_n\sqrt{t}<1
\end{split}
\end{equation}
provided that we shrink $T_\ell$ further. This shows that $x_0\in \Psi_t\left(B_{g_0}(x_0,2)\right)$ for $t\in [0,T_\ell]$ and thus completes the proof.\end{proof}

\end{proof}

The next Lemma is a pseudolocality property of Ricci flow, showing that the curvature will remain under control locally along the Ricci-DeTurck flow. 
\begin{lma}\label{lma:pseudo}
Suppose $g(t)$ is a Ricci-DeTurck flow with respect to $g_{euc}$ on $\mathbb{R}^n\times [0,1]$ such that $g(0)=g_0$ and 
\begin{enumerate}
\item[(i)] $\displaystyle 2^{-1} g_{euc}\leq g(t)\leq 2 g_{euc}$; 
\item[(ii)] $|\nabla^{euc,2}g(t)|+|\nabla^{euc} g(t)|^2\leq t^{-1}$
\end{enumerate}
on $\mathbb{R}^n\times (0,1]$. Suppose $x_0\in \mathbb{R}^n$ and $\ell\in \mathbb{N}$ are such that 
$$\sup_{B_{g_0}(x_0,4)}|\nabla^{g_0,k}\Rm(g_0)|\leq L_k$$ for all $0\leq k\leq \ell$, then there exists $\Lambda_k(n,k,L_k)$ such that  
$$|\nabla^{g(t),k}\Rm(g(x_0,t))|\leq \Lambda_k$$ for all $0\leq k\leq \ell$ and $t\in [0,1]$.
\begin{equation}
\end{equation}
\end{lma}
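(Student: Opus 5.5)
We will prove the lemma on the induced Ricci flow and then transfer the conclusion back through $\Psi_t$, following the proof of Lemma~\ref{lma:scalar-low-bdd}. Let $\Psi_t$ solve \eqref{eqn:RD-ODE} with $\Psi_0=\mathrm{Id}$ and set $\bar g(t):=\Psi_t^*g(t)$. This is a complete Ricci flow on $\mathbb{R}^n\times[0,1]$ with $\bar g(0)=g_0$. Assumptions (i) and (ii) give $|\Rm(\bar g(t))|\le C_n t^{-1}$, and $\bar g(t)$ is uniformly bi-Lipschitz equivalent to $g_{euc}$ by (i) together with the usual distance distortion estimates. Since curvature norms $|\nabla^k\Rm|$ are diffeomorphism invariant, it suffices to bound $|\nabla^{\bar g(t),k}\Rm(\bar g(t))|$ at $\Psi_t^{-1}(x_0)$. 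Equivalently, we bound it on a fixed $g_0$-ball and check that this ball contains $\Psi_t^{-1}(x_0)$.

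\textbf{Step 1: local curvature bound near $x_0$.} The initial data has $|\Rm(g_0)|\le L_0$ on $B_{g_0}(x_0,4)$, and the flow satisfies the a priori bound $|\Rm|\le C_nt^{-1}$. We apply a local curvature estimate of Shi type, which is a localized version of pseudolocality for flows with $|\Rm|\le a/t$. Concretely, we use \cite[Corollary 3.3]{SimonTopping2022} to keep balls $B_{\bar g(t)}(x_0,r)$ nested inside $B_{g_0}(x_0,4)$ for short time. We then use a maximum principle argument with a cutoff function built from $d_{g_0}$, in the spirit of \cite{LeeTam2022} applied to the evolution $(\partial_t-\Delta)|\Rm|^2\le -2|\nabla\Rm|^2+16|\Rm|^3$. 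This yields $|\Rm(\bar g(t))|\le C(n,L_0)$ on $B_{g_0}(x_0,3)\times[0,T(n,L_0)]$. For $t\ge T$ the global bound $|\Rm|\le C_nt^{-1}\le C_nT^{-1}$ already suffices.

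\textbf{Step 2: higher derivatives.} We use the modified Shi estimates with initial derivative bounds, proved by induction on $k\le\ell$ as in Lu--Tian's local version of Shi's estimates. Given $|\Rm|\le C$ on a parabolic region and $|\nabla^{j}\Rm(g_0)|\le L_j$ for $j\le k$ at time $0$, the estimate gives $|\nabla^{k}\Rm|\le C(n,k,L_0,\dots,L_k)$ on a slightly smaller ball, with no $t^{-k/2}$ blow-up. Each step of the induction shrinks the ball from radius $3$ toward radius $2$. As in Step 1, times $t\ge T$ are handled by the interior Shi estimate $|\nabla^k\Rm|\le C t^{-k/2}$ with $t\ge T$.

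\textbf{Step 3: locating $x_0$.} As in Claim~\ref{claim:contain}, we have $|\partial_t\Psi_t|=|W|\le C_n t^{-1/2}$ by (ii), so $|\Psi_t^{-1}(x_0)-x_0|_{euc}\le C_n\sqrt t$. Combined with (i), after possibly shrinking $T$, this shows $\Psi_t^{-1}(x_0)\in B_{g_0}(x_0,2)$ for $t\le T$. Only short times need this, since large times are covered by the global bounds. Pulling the estimates back through $\Psi_t$ then gives the bound at $x_0$.

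The main obstacle is Step 1. A local curvature bound from a pointwise curvature bound on the initial data would normally require pseudolocality, which needs almost-Euclidean isoperimetry. Here that is supplied by the bi-Lipschitz closeness (i) to Euclidean space, but the cleaner route is the localized maximum principle using the $a/t$ bound from (ii). I would try first to adapt \cite[Theorem 1.1]{LeeTam2022} to the quantity $|\Rm|$, exactly as it was applied to scalar curvature in Lemma~\ref{lma:scalar-low-bdd}.
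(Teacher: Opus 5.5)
Your outline matches the paper's proof step for step. You pass to $\bar g(t)=\Psi_t^*g(t)$, obtain a curvature bound on a fixed $g_0$-ball for short time, upgrade it with the modified Shi estimates (the paper cites \cite[Theorem 14.16]{ChowBook} for these), and locate $\Psi_t^{-1}(x_0)$ as in Claim~\ref{claim:contain}. Your Euclidean estimate $|\Psi_t^{-1}(x_0)-x_0|\le C_n\sqrt t$ is a fine substitute for that claim, and handling $t\ge T$ through the global $C_n/t$ bound is also fine. The gap is in Step~1, which carries all the content. The paper does not prove it by hand. It invokes B.-L.~Chen's local curvature estimate \cite[Corollary 3.2]{Chen2009}: a Ricci flow with $|\Rm|\le a/t$ whose initial metric has $|\Rm(g_0)|\le r_0^{-2}$ on $B_{g_0}(x_0,r_0)$ satisfies $|\Rm|\le C(n,a)r_0^{-2}$ near $x_0$ for $t\le\theta(n,a)r_0^2$. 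You describe this statement, but you propose to derive it by adapting \cite[Theorem 1.1]{LeeTam2022}, and that does not work as described.

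The Lee--Tam principle, as used in Lemma~\ref{lma:scalar-low-bdd}, applies to a function that is $\le 0$ at $t=0$, is bounded by $\alpha/t$, and satisfies a linear inequality $(\partial_t-\Delta)\varphi\le L\varphi$ with bounded coefficient. The function $\kappa-\mathrm{scal}$ fits because its reaction term $-2|\Ric|^2$ has a sign. For curvature the reaction is quadratic, $(\partial_t-\Delta)|\Rm|\le C_n|\Rm|^2$, and the initial value is $\le L_0$, not $\le 0$. Subtracting the ODE solution $\psi$ with $\psi'=C_n\psi^2$, $\psi(0)=L_0$, gives $(\partial_t-\Delta)(|\Rm|-\psi)\le C_n(|\Rm|+\psi)(|\Rm|-\psi)$. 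The only a priori control on this coefficient is $C_n(a/t+\psi)$, and a coefficient of order $a/t$ cannot be absorbed. Indeed, $f=t^{C_na}$ solves $f'=\frac{C_na}{t}f$ with $f(0)=0$ and $f\le a/t$ for small $t$, so no maximum principle can give $|\Rm|-\psi\le t^\ell$ from such a linear inequality.

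Running a cutoff argument on $F=\eta|\Rm|$ directly fails for the same reason. Both the reaction term and the cutoff error terms are controlled only through $|\Rm|\le a/t$, so at a maximum you get only $F'\le\frac{Ca}{t}F+\frac{C}{t}$, which gives nothing as $t\to0$. A cutoff built from $d_{g_0}$ has a further problem: $\Delta_{\bar g(t)}$ of it involves $\bar\Gamma(t)-\Gamma(g_0)$, which is not controlled before the curvature is. Removing this $a/t$ loss is exactly what Chen's estimate provides. Step~1 should therefore be replaced by an appeal to \cite[Corollary 3.2]{Chen2009}, or by an actual proof of it; with that in place, the rest of your argument goes through as in the paper.
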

\begin{proof}
As in the proof of Lemma~\ref{lma:scalar-low-bdd}, we consider the Ricci flow $\bar g(t):=\Psi_t^*g(t)$ on $\mathbb{R}^n\times [0,1]$. It follows from \cite[Corollary 3.2]{Chen2009} and the modified Shi’s higher order estimates \cite[Theorem 14.16]{ChowBook} that 
\begin{equation}
\sup_{B_{g_0}(x_0,2)} |\nabla^{\bar g(t),k}\Rm(\bar g(t))|\leq \Lambda_k
\end{equation}
for $t\in [0,1]$. The conclusion follows from Claim~\ref{claim:contain} using the fact that $\bar g(t)=\Psi_t^*g(t)$.
\end{proof}

\section{quantification under weak convergence}\label{sec:proof}

In this section, we prove the quantitative scalar curvature estimates. We first construct a local smoothing of the metric $g_0$ using the Ricci–DeTurck flow.

\begin{lma}\label{lma:lift}
For any $\e>0$, there exists $\delta(n,\e)>0$ such that the following holds. Suppose $(M^n,g_0)$ is a smooth manifold and $x_0\in M,r>0$ such that 
\begin{enumerate}
\item[(i)] $B_{g_0}(x_0,r)\Subset M$;
\item[(ii)] $|\Rm(g_0)|\leq r^{-2}$ on $B_{g_0}(x_0,r)$.
\end{enumerate}
Then there exists a local diffeomorphism $\Phi:B_{euc}(\delta r)\to B_{g_0}(x_0,\delta r)$ such that $\Phi\left( B_{euc}(\delta r)\right)=B_{g_0}(x_0,\delta r)$, $\Psi(0^n)=x_0$ and
\begin{equation}
(1-\e) g_{euc}\leq \Phi^* g_0\leq (1+\e) g_{euc},\;\;\text{on} \;\; B_{euc}(\delta r).
\end{equation}
If in addition $\mathrm{Vol}_{g_0}\left(B_{g_0}(x_0,r)\right)\geq v_0r^n$ for some $v_0>0$, then $\Phi$ can be chosen to be diffeomorphism if $\delta$ is chosen to be small depending also on $v_0$.
\end{lma}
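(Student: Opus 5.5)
The natural choice is $\Phi:=\exp_{x_0}\circ \iota$, where $\iota:\mathbb{R}^n\to T_{x_0}M$ is a linear isometry. We then check the three required properties via Rauch comparison.

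By scaling we may assume $r=1$, since both the hypotheses and the conclusion are scale invariant once $\delta$ is measured in units of $r$.

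\textbf{Step 1: $\Phi$ is a local diffeomorphism with $C^0$ control.} Since $|\Rm(g_0)|\le 1$ on $B_{g_0}(x_0,1)$, every geodesic from $x_0$ of length less than $1$ stays inside $B_{g_0}(x_0,1)\Subset M$. Hence $\exp_{x_0}$ is defined on $B_{euc}(1)\subset T_{x_0}M$ by completeness of the relatively compact ball. Along such geodesics the sectional curvatures lie in $[-1,1]$, so there are no conjugate points before length $\pi$. Rauch comparison applied to Jacobi fields $J$ with $J(0)=0$ gives
\begin{equation*}
\frac{\sin s}{s}\,|v|\le |d\exp_{x_0}|_{s\theta}(v)|_{g_0}\le \frac{\sinh s}{s}\,|v|,\qquad |\theta|=1,\ 0<s<1 .
\end{equation*}
For $s\le \delta$ both ratios lie in $[1-C\delta^2,1+C\delta^2]$. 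Choosing $\delta(\e)$ with $C\delta^2\le \e/3$ yields $(1-\e)g_{euc}\le \Phi^*g_0\le (1+\e)g_{euc}$ on $B_{euc}(\delta)$. In particular $\Phi$ is a local diffeomorphism, and $\Phi(0)=x_0$.

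\textbf{Step 2: surjectivity onto $B_{g_0}(x_0,\delta)$.} For $y\in B_{g_0}(x_0,\delta)$, a minimizing curve from $x_0$ to $y$ of length less than $\delta$ stays inside $B_{g_0}(x_0,1)$. By the Hopf--Rinow argument on the compact closure, there is a minimizing geodesic from $x_0$ to $y$ of length $d(x_0,y)<\delta$. Hence $y=\exp_{x_0}(v)$ with $|v|<\delta$. Conversely, $d(x_0,\exp_{x_0}v)\le|v|$. This gives $\Phi(B_{euc}(\delta))=B_{g_0}(x_0,\delta)$.

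\textbf{Step 3: injectivity under non-collapsing, the main obstacle.} The remaining task is to rule out short geodesic loops at $x_0$; the bounded-curvature part is routine. I would invoke the Cheeger--Gromov--Taylor injectivity radius estimate, which is local: if $|\sec|\le 1$ on $B_{g_0}(x_0,1)\Subset M$ and $\mathrm{Vol}(B_{g_0}(x_0,1))\ge v_0$, then $\mathrm{inj}_{g_0}(x_0)\ge i_0(n,v_0)>0$. As a sanity check of the mechanism, a loop of length $2\ell$ would force the pulled-back metric on $B_{euc}(1)$ to cover $B_{g_0}(x_0,1)$ with multiplicity roughly $\ell^{-1}$. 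Bishop--Gromov on the pullback bounds the volume upstairs by $C(n)$, so $v_0\lesssim C\ell$ and $\ell\gtrsim v_0$.

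Taking $\delta\le \min\{\delta(\e), i_0/2\}$, the map $\exp_{x_0}$ is injective on $B_{euc}(\delta)$. Together with Steps 1 and 2, this makes $\Phi$ a diffeomorphism onto $B_{g_0}(x_0,\delta)$.
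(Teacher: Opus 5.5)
Your proposal is correct and follows the paper's own route. You take $\Phi=\exp_{x_0}$, use Rauch comparison to get the almost-Euclidean bounds on a small ball (where the paper simply cites comparison and Hamilton), and invoke the local Cheeger--Gromov--Taylor injectivity radius estimate in the non-collapsed case; your surjectivity argument via the local Hopf--Rinow theorem on the relatively compact ball fills in a detail the paper leaves implicit.
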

\begin{proof}
This follows comparison Theorem that the conjugate radius is bounded from below by $C_n^{-1} r$. Hence, we might take $\Phi$ to be the exponential map $\Phi:=\exp_{x_0}$. The almost Euclidean in metric sense can be achieved by shrinking the radius, see \cite[Theorem 4.10]{Hamilton1995} for example. If in addition, the volume is bounded from below, it follows from the curvature upper bound and the injectivity radius estimate from \cite{CheegerGromovTaylor1982} that $\mathrm{inj}_{g_0}(x_0)\geq c(n,v_0)r$. Hence $\Phi$ can be chosen the normal coordinate at $x_0$.
\end{proof}

\medskip

We first prove the quantification under $C^0$ convergence. 
\begin{proof}[Proof of Theorem~\ref{thm:main}]
For convenience, we write $$||\hat g_0-g_0||_{L^\infty(\Omega)}:= \sigma \quad \text{and}\quad \inf_{\Omega} \mathrm{scal}(\hat g_0):=\kappa.$$
We will use $\Lambda_i$ to denote all dimensional constants. We let $\e_0(n)$ be the dimensional constant from Theorem~\ref{thm:stability-RDF} and insist $\sigma\leq \frac13 \e_0$.

We apply Lemma~\ref{lma:lift} with $\e=\frac14 \e_0$, and obtain $1>\delta_0(\e_0,n)>0$ and a local diffeomorphism $\Phi:B_{euc}(\delta_0 r)\to B_{g_0}(x_0,\delta_0 r )$ such that  
\begin{equation}\label{eqn:equiv-lift}
\left\{
\begin{array}{ll}
\left(1-\e_0\right)g_{euc}\leq \Phi^*g_0 \leq \left(1+\e_0\right) g_{euc};\\[2mm]
\left(1-\e_0\right)g_{euc}\leq \Phi^*\hat g_0 \leq \left(1+\e_0\right) g_{euc}
\end{array}
\right.
\end{equation}
on $B_{euc}(\delta_0 r)$. Here we assume $\sigma \leq \frac13\e_0$. Since the conclusion is scaling invariant, it suffices to prove the result for $r=20\delta_0^{-1}$.

\medskip

We choose a smooth function $\phi$ on $\mathbb{R}^n$ such that $\phi\equiv 1$ on $B_{euc}(8)$, vanishes outside $B_{euc}(10)$ and satisfies $|\nabla^{euc}\phi|\leq 10^4$. Define the metric 
\begin{equation}
\left\{
\begin{array}{ll}
 g_0^\sharp:= \phi \Phi^* g_0 + (1-\phi) g_{euc};\\[2mm]
\hat g_0^\sharp:= \phi \Phi^* \hat g_0 + (1-\phi) g_{euc}
\end{array}
\right.
\end{equation}
which is a smooth metric on $\mathbb{R}^n$. From \eqref{eqn:equiv-lift}, both $ g_0^\sharp$ and $\hat g_0^\sharp$ remain $\e_0$-close to $g_{euc}$. Hence, Theorem~\ref{thm:stability-RDF} applies to produce a Ricci-DeTurck flow $\hat g(t),g(t)$ with respect to $g_{euc}$ on $\mathbb{R}^n\times [0,+\infty)$ with $\hat g(0)=\hat g_0^\sharp$, $g(0)=g_0^\sharp$ on $\mathbb{R}^n$ and
\begin{equation}\label{eqn:RDF-esti}
\left\{
\begin{array}{ll}
 (1-C_0\e_0) g_{euc}\leq g(t),\hat g(t)\leq (1+C_0\e_0)g_{euc};\\[1mm]
 |\nabla^{euc} g(t)|^2+ |\nabla^{euc,2}g(t)|\leq t^{-1};\\[1mm]
 |\nabla^{euc} \hat g(t)|^2+ |\nabla^{euc,2}\hat g(t)|\leq t^{-1}
\end{array}
\right.
\end{equation}
on $\mathbb{R}^n\times (0,+\infty)$. Furthermore,   
\begin{equation}\label{eqn:RDF-stabi}
\begin{split}
 \sum_{k=0}^2 t^{k/2} ||\nabla^{euc,k}\left(\hat g(t)-g(t)\right)||_{L^\infty,g_{euc}}
&\leq C_0 ||\hat g_0^\sharp-g_0^\sharp||_{L^\infty,g_{euc}}\leq  C_0\sigma.
\end{split}
\end{equation}

In particular, \eqref{eqn:RDF-stabi} implies that for all $(x,t)\in \mathbb{R}^n\times (0,1]$,
\begin{equation}\label{eqn:scal-comp}
\mathrm{scal}_{\hat g(t)}(x)\leq  \tr_{\hat g(t)}\Ric(g(t))|_{x}+ \Lambda_1\sigma t^{-1}.
\end{equation}

We consider the Ricci-DeTurck ODE with respect to $g(t)$:
\begin{equation}
\left\{
\begin{array}{ll}
\partial_t \Psi_t(x):=-W(\Psi_t(x),t);\\
\Psi_0(x)=x.
\end{array}
\right.
\end{equation}
and now estimate the left hand side from below, when it is evaluated at $x_t:=\Psi_t(0^n)$.
\begin{claim}\label{claim:scal-ref-ref}
For any $\ell\in \mathbb{N}$, there exists $T_\ell(n)\in (0,1)$ such that $$\mathrm{scal}_{\hat g(t)}(x_t)\geq \kappa -t^\ell$$ for all $t\in [0,T_\ell]$.
\end{claim}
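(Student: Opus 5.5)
The plan is to run the argument of Lemma~\ref{lma:scalar-low-bdd} for the flow $\hat g(t)$, but with the Ricci--DeTurck diffeomorphisms of $\hat g(t)$ rather than those of $g(t)$. I then check that the point $x_t=\Psi_t(0^n)$, produced by the flow of $g(t)$, still lies in the region where the scalar lower bound has been propagated.

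\emph{Step 1: hypotheses of Lemma~\ref{lma:scalar-low-bdd} for $\hat g(t)$.} By \eqref{eqn:RDF-esti}, after shrinking $\e_0$ so that $C_0\e_0\le \frac12$, the flow $\hat g(t)$ satisfies hypotheses (i) and (ii) of that lemma on $\mathbb{R}^n\times(0,1]$. For the initial scalar bound, note that $\phi\equiv1$ on $B_{euc}(8)$, so $\hat g_0^\sharp=\Phi^*\hat g_0$ there. Since $\Phi$ is a local diffeomorphism into $B_{g_0}(x_0,\delta_0 r)\subset\Omega$ (recall $r=20\delta_0^{-1}$, so $B_{euc}(8)\subset B_{euc}(\delta_0 r)$), it is a local isometry from $\Phi^*\hat g_0$ to $\hat g_0$. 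Hence
\[
\mathrm{scal}(\hat g_0^\sharp)\ge \inf_\Omega \mathrm{scal}(\hat g_0)=\kappa \quad\text{on } B_{euc}(8).
\]
By \eqref{eqn:equiv-lift}, $\hat g_0^\sharp$ is $(1\pm\e_0)$-equivalent to $g_{euc}$, so $B_{\hat g_0^\sharp}(0^n,4)\subset B_{euc}(8)$. Thus $\mathrm{scal}(\hat g_0^\sharp)\ge\kappa$ on $B_{\hat g_0^\sharp}(0^n,4)$.

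\emph{Step 2: propagate the bound along the Ricci flow.} Let $\hat\Psi_t$ solve \eqref{eqn:RD-ODE} with the vector field $\hat W$ of $\hat g(t)$, and set $\bar g(t)=\hat\Psi_t^*\hat g(t)$. This is a Ricci flow with $\bar g(0)=\hat g_0^\sharp$ and $|\Rm(\bar g(t))|\le C_n t^{-1}$. Exactly as in the proof of Lemma~\ref{lma:scalar-low-bdd}, \cite[Theorem 1.1]{LeeTam2022} gives $T_\ell(n)$ such that $\mathrm{scal}_{\bar g(t)}\ge\kappa-t^\ell$ on $B_{\hat g_0^\sharp}(0^n,2)\times[0,T_\ell]$. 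Since $\hat\Psi_t$ is an isometry from $\bar g(t)$ to $\hat g(t)$, this becomes
\[
\mathrm{scal}_{\hat g(t)}\ge\kappa-t^\ell \quad\text{on } \hat\Psi_t\bigl(B_{\hat g_0^\sharp}(0^n,2)\bigr).
\]

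\emph{Step 3 (main obstacle): $x_t$ lies in this image.} This is the one point that differs from Claim~\ref{claim:contain}, because $x_t$ is defined through $\Psi_t$ (driven by $g(t)$) while the good set is defined through $\hat\Psi_t$.
\begin{itemize}
\item By \cite[Corollary 3.3]{SimonTopping2022}, for $T_\ell$ small, $\hat\Psi_t(B_{\hat g_0^\sharp}(0^n,2))\supseteq B_{\hat g(t)}(\hat\Psi_t(0^n),1)$.
\item Both $W$ and $\hat W$ are built from first derivatives of $g(t)$ and $\hat g(t)$, which are bounded by $t^{-1/2}$ by \eqref{eqn:RDF-esti}. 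Hence $|\partial_t\Psi_t|,|\partial_t\hat\Psi_t|\le C_n t^{-1/2}$, and integrating gives $|x_t|_{euc},|\hat\Psi_t(0^n)|_{euc}\le C_n\sqrt t$.
\item Therefore $d_{\hat g(t)}(x_t,\hat\Psi_t(0^n))\le 2\,d_{euc}(x_t,\hat\Psi_t(0^n))\le 4C_n\sqrt t<1$ after shrinking $T_\ell$ further.
\end{itemize}
So $x_t$ lies in the good set, and the claim follows for all $t\in[0,T_\ell]$.

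No quantitative closeness between $W$ and $\hat W$ is needed here; the crude $\sqrt t$ displacement bounds for both flows suffice.
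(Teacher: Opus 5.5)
Your proof is correct and uses the same ingredients as the paper's: the Lee--Tam local maximum principle in the Ricci flow gauge, the shrinking-balls containment, and the displacement bound $|W|\le C_n t^{-1/2}$. The only difference is packaging: the paper invokes Lemma~\ref{lma:scalar-low-bdd} as a black box at every point $x\in B_{\hat g(0)}(0^n,2)$ (using that $T_\ell$ depends only on $n$) to get the bound on a fixed neighbourhood of the origin and then only needs $|x_t|_{euc}\le C_n\sqrt t$, whereas you reopen that lemma's proof once with $\hat\Psi_t$ and compare $x_t$ with $\hat\Psi_t(0^n)$ directly.
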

\begin{proof}[Proof of Claim~\ref{claim:scal-ref-ref}]
Recall that $\hat g(t)$ is a Ricci-DeTurck flow with $\hat g(0)=\Phi^* \hat g_0$ on $B_{euc}(8)=\Phi^*\left( B_{g_0}(x_0,8)\right)$. By \eqref{eqn:RDF-esti}, we might assume  $B_{\hat g(0)}(0^n,6)\Subset B_{euc}(8)$ by shrinking $\e_0$ if necessary. For $x\in B_{\hat g(0)}(0^n,2)$ so that $B_{\hat g(0)}(x,4)\subseteq B_{euc}(8)$, we apply  Lemma~\ref{lma:scalar-low-bdd} using \eqref{eqn:RDF-esti} to show that 
$$\mathrm{scal}_{\hat g(t)}(x)\geq \kappa -t^\ell$$ for all $(x,t)\in  B_{\hat g(0)}(0^n,2)\times  [0,T_\ell]$.  Since $\partial_t \Psi_t=|W|\leq t^{-1/2}$, we have $x_t\in B_{euc}(0^n,2)$ if $T_\ell$ is small enough. Result follows.
\end{proof}

\medskip

We now control the right hand side, at the origin $x_t\in \mathbb{R}^n$.
\begin{claim}\label{claim:upper}
There exists $\Lambda_2>0$ such that for all $t\in [0,1]$, we have 
\begin{equation}
\tr_{\hat g(t)}\Ric(g(t))|_{x=x_t}\leq \mathrm{scal}_{g_0}(x_0)+\Lambda_2(t+\sigma).
\end{equation}
\end{claim}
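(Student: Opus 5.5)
We split $\tr_{\hat g(t)}\Ric(g(t))$ into two pieces and bound each separately:
\begin{equation*}
\tr_{\hat g(t)}\Ric(g(t))=\mathrm{scal}_{g(t)}+\left(\hat g^{ij}(t)-g^{ij}(t)\right)R_{ij}(g(t)),
\end{equation*}
with everything evaluated at $x_t$. By \eqref{eqn:RDF-stabi} with $k=0$ we have $|\hat g(t)-g(t)|\leq C_0\sigma$ uniformly. Since both metrics are uniformly equivalent to $g_{euc}$ by \eqref{eqn:RDF-esti}, this gives $|\hat g^{-1}(t)-g^{-1}(t)|\leq \Lambda\sigma$. The second term is therefore bounded by $\Lambda\sigma|\Ric(g(t))|(x_t)$. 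The whole argument thus reduces to two facts: a curvature bound for $g(t)$ at $x_t$ that does not degenerate like $t^{-1}$, and an estimate $\mathrm{scal}_{g(t)}(x_t)\leq \mathrm{scal}_{g_0}(x_0)+\Lambda t$.

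\textbf{Step 1: curvature bound.} After the rescaling $r=20\delta_0^{-1}$, assumption (ii) together with the comparability of $\Phi^*g_0$ to $g_{euc}$ (which we may improve to $C^2$ comparability by choosing $\Phi$ as normal or harmonic-type coordinates in Lemma~\ref{lma:lift}) gives bounds on $|\nabla^{k}\Rm(g_0^\sharp)|$ for $k\leq 2$ on $B_{euc}(8)$. These bounds depend only on $n$, since in this ball $g_0^\sharp=\Phi^*g_0$ is locally isometric to $g_0$. Since $\bar g(t)=\Psi_t^*g(t)$ is a Ricci flow with $\bar g(0)=g_0^\sharp$, Lemma~\ref{lma:pseudo} with $\ell=2$ applies and yields $|\nabla^{k}\Rm(g(t))|\leq\Lambda$ at $\Psi_t(y)$ for $y$ near $0^n$ and $t\in[0,1]$. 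In particular this holds at $x_t=\Psi_t(0^n)$, so the cross term is at most $\Lambda_2\sigma$.

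\textbf{Step 2: scalar curvature along the trajectory.} Because $\Psi_t$ is an isometry from $\bar g(t)$ to $g(t)$, we have $\mathrm{scal}_{g(t)}(x_t)=\mathrm{scal}_{\bar g(t)}(0^n)$. This is the reason for evaluating at $x_t$ rather than at a fixed point: it removes the DeTurck drift. Along the Ricci flow,
\begin{equation*}
\partial_t\mathrm{scal}_{\bar g(t)}=\Delta_{\bar g(t)}\mathrm{scal}_{\bar g(t)}+2|\Ric(\bar g(t))|^2 .
\end{equation*}
By Step 1 (with $k=2$), the right-hand side is bounded by $\Lambda$ near $0^n$ for $t\in[0,1]$. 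Integrating in time gives
\begin{equation*}
\mathrm{scal}_{\bar g(t)}(0^n)\leq \mathrm{scal}_{g_0^\sharp}(0^n)+\Lambda t=\mathrm{scal}_{g_0}(x_0)+\Lambda t,
\end{equation*}
using $\Phi(0^n)=x_0$. Combining the two steps yields the claim for $t\in[0,1]$; $t=0$ is the trivial case.

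\textbf{Main obstacle.} The delicate point is verifying the hypotheses of Lemma~\ref{lma:pseudo} in the needed form. We need second-derivative control of $\Rm(g_0^\sharp)$ on a ball $B_{g_0^\sharp}(0^n,4)$, which lies inside the region where $\phi\equiv1$ (guaranteed by $B_{euc}(8)$ and $\e_0$ small). We also need the resulting bounds for $\bar g(t)$ to hold at the point $0^n$ itself, not just in some image region. The first follows from local isometry. For the second, the proof of Lemma~\ref{lma:pseudo} bounds $\bar g(t)$ on $B_{g_0}(x_0,2)$ directly, and we then use $\bar g$-quantities at $0^n$ rather than Claim~\ref{claim:contain}.
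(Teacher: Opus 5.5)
Your proposal is correct and follows the paper's proof essentially step for step. It uses the same splitting $\tr_{\hat g(t)}\Ric(g(t))=\mathrm{scal}_{g(t)}+(\hat g^{ij}-g^{ij})R_{ij}$, the same curvature control at $x_t$ from Lemma~\ref{lma:pseudo}, and the same integration of $\partial_t\mathrm{scal}=\Delta\mathrm{scal}+2|\Ric|^2$ along $x_t=\Psi_t(0^n)$, which is equivalent to working at $0^n$ for $\bar g(t)=\Psi_t^*g(t)$. The only difference is cosmetic: you read off the bounds for $\bar g(t)$ at $0^n$ and transport them by the isometry $\Psi_t$, whereas the paper uses $x_t\in B_{euc}(0^n,2)$ for small $t$; your version covers the full range $t\in[0,1]$ slightly more cleanly.
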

\begin{proof}[Proof of Claim~\ref{claim:upper}]
Similar to the proof of Claim~\ref{claim:scal-ref-ref}, using also  the fact that $\Phi: B_{g(0)}(0^n,4)\to B_{g_0}(x_0,4)$ is a local isometry, it follows from curvature assumptions and Lemma~\ref{lma:pseudo} that 
\begin{equation}\label{eqn:psei}
\sum_{k=0}^2 |\nabla^{g(t),k} \Rm(g(x,t))|\leq \Lambda_3
\end{equation}
for all $(x,t)\in B_{euc}(0^n,2)\times [0,1]$. In particular \eqref{eqn:psei} holds at $x_t$ since $x_t\in B_{euc}(0^n,2)$ for $t$ sufficiently small (depending only on dimension). Using the fact that 
\begin{equation}
\begin{split}
\partial_t \mathrm{scal}_{g(t)}(x_t)=\Delta_{g(t)}\mathrm{scal}_{g(t)}|_{x=x_t}+2|\Ric(g(t))|^2(x_t)\leq \Lambda_4.
\end{split}
\end{equation}

This together with \eqref{eqn:RDF-stabi} and \eqref{eqn:psei} shows that at $x_t$, we have
\begin{equation}
\begin{split}
\hat g^{ij}R_{ij} &=(\hat g^{ij}-g^{ij})R_{ij}+\mathrm{scal}_{g(t)}\\
&\leq \mathrm{scal}_{g_0}(x_0)+\Lambda_5\sigma +\Lambda_5 t.
\end{split}
\end{equation}
Here we have used $\mathrm{scal}_{g_0}(x_0)=\mathrm{scal}_{g(0)}(0^n)$, since $\Phi$ is a local isometry with $\Phi(0^n)=x_0$.
\end{proof}

Combining \eqref{eqn:scal-comp}, Claim~\ref{claim:scal-ref-ref} and Claim~\ref{claim:upper}, we conclude that there is $T_1(n)>0$ such that for all $t\in (0,T_1]$ (by fixing $\ell=1$),
\begin{equation}\label{eqn:sca-approx}
\kappa \leq \mathrm{scal}_{g_0}(x_0)+\Lambda_6 (t +\sigma t^{-1}).
\end{equation}

Since $\sigma\leq \e_0$, we might assume $\sigma^{1/2}\leq T_1$. Result follows by choosing $t=\sigma^{1/2}$.
\end{proof}

\medskip

We next consider the case when $||g_0-\hat g_0||_{L^\infty(\Omega,g_0)}\leq \e_n$ and $||g_0-\hat g_0||_{L^p(\Omega,g_0)}:=\sigma<<\e_n$ for $p\in [1,+\infty)$. 
\begin{proof}[Proof of Theorem~\ref{thm:Lp-quant}]
This follows from modification of proof of Theorem~\ref{thm:main}. We only give a sketch. We use $L_i$ to denote constants depending only on $n,v_0$.  We also denote $\sigma:=||g_0-\hat g_0||_{L^p(\Omega,g_0)}$.

As in the proof of Theorem~\ref{thm:main}, we consider the local property of the Ricci-DeTurck flow $g(t)$ and $\hat g(t)$, restricted on the lifted ball $B_{euc}(6)$, where now $\Phi$ from Lemma~\ref{lma:lift} is a diffeomorphism instead of a local diffeomorphism. In particular their initial data satisfies 
\begin{equation}
\begin{split}
||g_0^\sharp-\hat g_0^\sharp||_{L^p(\mathbb{R}^n)}&\leq ||\Phi^*g_0-\hat \Phi^* g_0||_{L^p(B_{euc}(10),g_{euc})}\\
&\leq L_1 ||\Phi^*g_0-\Phi^*\hat  g_0||_{L^p(B_{euc}(10),\Phi^* g_{0})}\\
&=L_1 ||g_0-\hat g_0||_{L^p(B_{g_0}(x_0,10),g_{0})}\leq L_1\sigma.
\end{split}
\end{equation}

Hence Proposition~\ref{prop:improve-L-infty-Lp} implies an improved estimate: 
\begin{equation}
||g(t)-\hat g(t)||_{L^\infty(g_{euc})}\leq L_2\sigma t^{-\frac{n}{2p}} 
\end{equation}
for all $t\in (0,1]$. For each $t\in (0,1]$ by  considering the time interval $[t/2,t]$, the standard higher order estimates (for example see the proof of \cite[Lemma 2.6]{Lee2025}) implies that 
\begin{equation}
t^{k/2}||\nabla^{euc,k}\left(g(t)-\hat g(t)\right)||_{L^\infty(g_{euc})}\leq L_2\sigma t^{-\frac{n}{2p}}.
\end{equation}

Now we are in position to carry out the same analysis as in the proof of Theorem~\ref{thm:main}, except now \eqref{eqn:scal-comp} is replaced by 
\begin{equation}\label{eqn:scal-comp-collaps}
\mathrm{scal}_{\hat g(t)}(x)\leq  \tr_{\hat g(t)}\Ric(g(t))|_{x}+ L_3\sigma t^{-1-\frac{n}{2p}}.
\end{equation}

By combining \eqref{eqn:scal-comp-collaps} with Claim~\ref{claim:scal-ref-ref} and Claim~\ref{claim:upper}, we  conclude that
\begin{equation}
\kappa \leq \mathrm{scal}_{g_0}(x_0)+L_4 \left(t+ t^{-1} \sigma t^{-\frac{n}{2p}}\right),
\end{equation}
in contrast with \eqref{eqn:sca-approx}. Result follows by choosing $t=\sigma^\frac1{2+\frac{n}{2p}}$.
\end{proof}

\begin{rem}
It is clear from the proof of Theorem~\ref{thm:main} and Theorem~\ref{thm:Lp-quant}, that quantification still holds as long as some quantitative continuity of curvature is assumed. It is however unclear whether the resulting estimate is sharp or not.
\end{rem}

\appendix

\section{Example of Mazurowski-Yao}\label{sec:appe}

In this section, we show that the example constructed by Mazurowski-Yao \cite[Section 2]{MazurowskiYao2026} has already showed the sharpness of the exponent in Theorem~\ref{thm:Lp-quant} for $1\leq p<+\infty$. The construction in \cite{MazurowskiYao2026} is based on conformal deformation of Euclidean metric on unit ball $B_{euc}(1)$. 

For $r^4:=\e> 0$, take a cutoff function $\eta$ on $B_{euc}(1)$  such that $\eta\equiv 1$ on $B_{euc}(r/2)$, vanishes outside $B_{euc}(r)$ and satisfies 
\begin{equation}
|\nabla^{euc} \eta|^2+|\nabla^{euc,2}\eta|\leq r^{-2}
\end{equation}
for some dimensional constant $C>0$. Let also 
\begin{equation}
v:=\frac{r^2-|x|^2}{2n}.
\end{equation}

Mazurowski-Yao consider the metric $g_0:=\phi_0^\frac4{n-2} g_{euc}$ and $g_\e:=\phi_\e^\frac{4}{n-2}g_{euc}$  on $B_{euc}(1)$, where 
\begin{equation}
\phi_\e:=1-\frac{|x|^2}{4(2+n)}+a \e^{1/2} \eta v 
\end{equation}
for $\e\geq 0$ and $a>0$. In particular, direct computation shows that 
\begin{equation}
\mathrm{scal}_{g_0}(0^n)=0,\;\;\text{and}\;\; \sum_{k=0}^2|\nabla^{g_0,k}\Rm(g_0)|\leq C_n.
\end{equation}
Furthermore, it was shown by Mazurowski-Yao \cite{MazurowskiYao2026} that provided that $a$ is small enough (depending on dimensional constant), we have $\inf_{B_{euc}(1)}\mathrm{scal}_{g_\e}\geq c_n\e^{1/2}$ while
\begin{equation}
\begin{split}
||g_\e-g_0||_{L^p}&\leq C_n || \phi_\e-\phi_0||_{L^p(B_{euc}(r))}\\
&\leq C_n \e^{\frac12} r^{2+\frac{n}p}=C_n \e^{1+\frac{n}{4p}}.
\end{split}
\end{equation}
This shows that the optimal exponent is at most $(2+\frac{n}{2p})^{-1}$.

\end{document}